\numberwithin{equation}{section}            
\theoremstyle{plain}
\newtheorem{thm}{Theorem}[section]
\newtheorem{prop}[thm]{Proposition}
\newtheorem{defi}[thm]{Definition}
\newtheorem{lem}[thm]{Lemma}
\newtheorem{cor}[thm]{Corollary}
\newtheorem{eg}[thm]{{Example}}
\theoremstyle{remark}
\newtheorem{rema}[thm]{Remark}
\newcommand{\bc}{{\mathbf{c}}}
\newcommand{\bs}{{\bf{s}}}
\newcommand{\N}{{\mathbb N}}
\newcommand{\field}{{\mathbb K}}
\newcommand{\gfrak}{{\mathfrak g}}
\newcommand{\kfrak}{{\mathfrak k}}
\newcommand{\kow}{{\varDelta}}
\newcommand{\ot}{\otimes}
\newcommand{\Z}{{\mathbb Z}}
\newcommand{\wt}{\widetilde}
\newcommand{\bbt}{\mathbb{T}}
\newcommand{\bbr}{\mathbb{R}}
\newcommand{\bbc}{\mathbb{C}}
\newcommand{\ol}[1]{\overline{#1}}
\newcommand*\pFqskip{8mu}
\newcommand*\pFq{\begingroup
        \catcode`\&\active
        \def ,{\mskip\pFqskip\relax}%
        \dopFq
}
\def\dopFq#1#2#3#4#5{%
        {}_{#1}\phi_{#2}\biggl[\genfrac..{0pt}{}{#3}{#4};#5\biggr]%
        \endgroup
}
\begin{document}
\title[Bivariate $q$-Hermite polynomials and deformed Serre relations]
{Bivariate continuous $q$-Hermite polynomials \\
and deformed quantum Serre relations}
\dedicatory{To Nicol\'as Andruskiewitsch on his 60th birthday, with admiration}
\author[W. Riley Casper]{W. Riley Casper}
\address{
Department of Mathematics \\
California State University \\
Fullerton, CA 92831 \\
U.S.A.
}
\email{wcasper@fullerton.edu}
\author[Stefan Kolb]{Stefan Kolb}
\address{School of Mathematics, Statistics and Physics,
Newcastle University, Newcastle upon Tyne NE1 7RU, United Kingdom}
\email{stefan.kolb@newcastle.ac.uk}
\author[Milen Yakimov]{Milen Yakimov}
\address{
Department of Mathematics \\
Louisiana State University \\
Baton Rouge, LA 70803 \\
U.S.A.
}
\email{yakimov@math.lsu.edu}
%\urladdr{https://www.math.lsu.edu/~yakimov}

%\thanks{}
%\date{}

\keywords{Quantum symmetric pairs, bivariate continuous $q$-Hermite polynomials}

\subjclass[2010]{Primary: 17B37, Secondary: 53C35, 16T05, 17B67}
\begin{abstract}
We introduce bivariate versions of the continuous $q$-Hermite polynomials. We
obtain algebraic properties for them (generating function, explicit expressions 
in terms of the univariate ones, backward difference equations and recurrence relations) 
and analytic properties (determining the orthogonality measure). 
We find a direct link between bivariate continuous $q$-Hermite polynomials 
and the star product method of \cite{a-KY19p} for quantum symmetric pairs 
to establish deformed quantum Serre relations for quasi-split quantum symmetric pairs of Kac-Moody type. 
We prove that these defining relations are obtained from the usual quantum Serre relations by 
replacing all monomials by multivariate orthogonal polynomials.
\end{abstract}
\maketitle

%%%%%%%%%%%%%%%%%%%%%%%%%%%%%%%%%%%%
\section{Introduction}
%%%%%%%%%%%%%%%%%%%%%%%%%%%%%%%%%%%%
Quantum groups have played a key role in many areas of mathematics and mathematical physics since their introduction by Drinfeld \cite{inp-Drinfeld1} and Jimbo \cite{a-Jimbo1}
in the 1980s. In the late 1990s Andruskiewitsch and Schneider initiated a powerful program for classifying pointed Hopf algebras \cite{MSRI-AS02}
which lead to far reaching generalizations of quantum groups, namely Drinfeld doubles of pre-Nichols algebras.

Quantum symmetric pairs in the above frameworks have become the subject of intense research. The general construction of quantum symmetric pairs in the setting of quantized enveloping algebras 
of finite dimensional semisimple Lie algebras was given by Letzter \cite{a-Letzter99}. The Kac--Moody setting was treated in \cite{a-Kolb14}. 
Quantum symmetric pairs in the setting of Drinfeld doubles of pre-Nichols algebras were defined in \cite{a-KY19p}. In those settings the quantum symmetric pairs 
having Iwasawa decompositions were characterized in \cite{a-Letzter97,a-Letzter99,a-Kolb14,a-KY19p}.

Quantum symmetric pair coideal subalgebras $B_\bc$ depend on a set of parameters $\bc=(c_i)_{i\in I}$, and are defined in terms of generators $B_i$ for $i\in I$ in the ambient Hopf algebra. The generators $B_i$ satisfy deformed quantum Serre relations, see \cite[Section 7]{a-Letzter03}, \cite[Section 7]{a-Kolb14}. One of the outstanding problems in the area of quantum symmetric pairs is to determine explicit, conceptual formulas for the deformed quantum Serre relations. The goal of this paper is the following:
\medskip

\noindent
{\bf{Metatheorem.}} {\em{The deformed quantum Serre relations for a quantum symmetric pair coideal subalgebra are obtained from the usual quantum Serre relations by replacing all monomials by multivariate orthogonal polynomials}}. 
\medskip

While in the present paper we only establish the Metatheorem in the so called quasi-split Kac--Moody setting, we expect that this phenomenon holds in full generality. Our proof is based on a result from \cite{a-KY19p} that {\em{(quantum) symmetric pair coideal subalgebras are isomorphic to star products on partial bosonizations of pre-Nichols algebras}}.

In \cite{a-Letzter03} Letzter developed a method to obtain the deformed quantum Serre relations from coproducts. She applied her method to obtain explicit relations for all quantum symmetric pairs of finite type. Letzter's method was extended to the Kac-Moody setting in \cite{a-Kolb14} and applied in the case of 
Cartan matrices $(a_{ij})_{i,j\in I}$ with $|a_{ij}|\le 3$. Recall that quantum symmetric pairs depend on an involutive diagram automorphism $\tau:I\rightarrow I$. In the case $\tau(i)=j\neq i$ the corresponding deformed quantum Serre relations were explicitly determined by Letzter's method in \cite[Theorem 3.6]{a-BalaKolb15}. In the case $\tau(i)=i\neq j$ Letzter's method gets substantially harder. Nonetheless, recently, de Clercq used Letzter's method to produce involved combinatorial formulas for deformed quantum Serre relations in the Kac--Moody case for $\tau(i)=i\neq j$ \cite{a-dC19p}. However, the connection to orthogonal polynomials is not immediate.

In the quasi-split Kac--Moody setting the deformed quantum Serre relations in the case $\tau(i)=i\neq j$ were first derived in \cite{a-CLW18p} in terms of so called $\imath$divided powers. The $\imath$divided powers are univariate polynomials and play an important role in the theory of canonical bases for quantum symmetric pairs \cite{a-BW18p}. However, an interpretation of $\imath$divided powers in terms of orthogonal polynomials is not known. In the quasi-classical limit, formulas for the corresponding deformed Serre relations were recently obtained by Stokman in \cite{a-Stok19}.

In the present paper we give an explicit expression of deformed quantum Serre relations for quasi-split quantum symmetric pairs in terms of bivariate 
continuous $q$-Hermite polynomials. Our proofs are shorter than those in previous approaches and are based on a direct relation
between the star products of  \cite{a-KY19p} and multivariate orthogonal polynomials.

The classical Hermite polynomials are the polynomials given by the recurrence relation 
\[
H_{n+1}(x) =  2x H_n(x) - 2n H_{n-1}(x),
\]
where $H_0(x)=1$. They have two types of $q$-analogs, the continuous and discrete $q$-Hermite polynomials \cite[\S 14.26-29]{koekoek2010}.
The continuous $q$-Hermite polynomials \cite[\S 14.6]{koekoek2010} satisfy the recurrence relation 
\[
H_{n+1}(x;q) =  2x H_n(x;q) - (1-q^n) H_{n-1}(x;q),
\]
where $H_0(x;q)=1$.
They appear in a number of diverse situations. For instance, recently Borodin and Corwin used them in the study of the dynamic  asymmetric simple exclusion process
\cite{a-BorCor22}. Motivated by It{\^o}'s complex bivariate orthogonal Hermite polynomials \cite{ito}, Ismail and Zhang \cite{ismail} 
defined and studied two versions of bivariate $q$-Hermite polynomials $H_{m,n}(x, y|q)$ (without additional parameters). 
They satisfy $H_{m,0}(x,y) =x^m$, $H_{0,n}(x,y) =y^n$. 

In this paper we define and study a completely different (two-parameter) family of bivariate continuous $q$-Hermite polynomials $H_{m,n}(z_1, z_2;q,r)$. They satisfy 
$$H_{m,0}(x,y;q,r) = H_m(x;q)\ \ \text{and}\ \ H_{0,n}(x,y;q,r) = H_n(y;q)$$
and are recursively defined by 
\begin{align*}
H_{m+1,n}(x,y;q,r) & = 2xH_{m,n}(x,y;q,r) - (1-q^m)H_{m-1,n}(x,y;q,r)\\
               & - q^m(1-q^n)rH_{m,n-1}(x,y;q,r).
\end{align*}
We establish algebraic and analytic properties of these polynomials. On the algebraic side, we prove that they are explicitly given by 
\begin{align}\label{eq:HmnHmHn}
H_{m,n}(x,y;q,r) = \sum_{k=0}^{\min(m,n)}\frac{(-1)^kq^{\binom{k}{2}}(q;q)_m(q;q)_nr^k}{(q;q)_{m-k}(q;q)_{n-k}(q;q)_k}H_{m-k}(x;q)H_{n-k}(y;q),
\end{align}
and in particular, they are symmetric with respect to $x$ and $y$: $H_{m,n}(x,y;q,r) = H_{n,m}(y,x;q,r)$. 
We show that their generating function is given by
$$\sum_{m,n=0}^\infty \frac{H_{m,n}(x,y;q,r)}{(q;q)_m(q;q)_n}s^mt^n  = \frac{(rst;q)_\infty}{|(se^{i\theta},te^{i\phi};q)_\infty|^2} \cdot$$
We derive an operator formulation and a backward difference equation for these polynomials (see Theorem \ref{thm:summary of bivariate q-Hermite polynomials}). 
On the analytic side we prove that they are orthogonal with respect to the measure 
\[
\frac{|(e^{2i(\alpha+\beta)}/r;q)_\infty|^2}{\sqrt{(1-x^2)(1-y^2)}}dxdy \; \; \mbox{on} \; \; [0,1] \times [0,1], \; \; 
\mbox{where} \; \; x = \cos(2\alpha), y=\cos(2\beta).
\]
We believe that these polynomials will find application outside the realm of Hopf algebras and quantum symmetric pairs.

With the above notation we can now express the deformed quantum Serre relations for quasi-split quantum symmetric pairs in the case $\tau(i)=i\neq j$. 
For $w(x,y)=\sum_{r,s}b_{rs} x^r y^s\in \field[x,y]$, set
\[
z \curvearrowright w(x, y) = \sum_{r,s}b_{rs} x^r z y^s.
\]
The following theorem is derived from the algebraic 
properties of bivariate continuous $q$-Hermite polynomials. The theorem holds for general deformation parameters $q$ including roots of unity.

\medskip

\noindent{\bf Theorem.} (Corollaries \ref{cor:dqS-bivariate}, \ref{cor:dqS-univariate}) {\em Let $i,j\in I$ and $\tau(i)=i\neq j$. Then the generators $B_i, B_j$ of the quantum symmetric pair coideal subalgebra $B_\bc$ satisfy the relation
\[
\sum_{\ell=0}^{1-a_{ij}} (-1)^{\ell} \begin{bmatrix}1-a_{ij} \\ \ell \end{bmatrix}_{q_i}
B_j \curvearrowright w_{1-a_{ij} - \ell, \ell}(B_i, B_i) =0, 
\]
where $w_{m,n}(x,y)= (2b_i)^{-m-n} H_{m,n}(b_ix,b_iy;q_i^2,q_i^{a_{ij}})$ and $b_i=\frac{1}{2}(q_i-q_i^{-1}) c_i^{-1/2}q_i^{-1/2}$. 
This relation can also be written as
\begin{align*}
   \sum_{\ell=0}^{1-a_{ij}} (-1)^{\ell} \begin{bmatrix}1-a_{ij} \\ \ell \end{bmatrix}_{q_i} w_{1-a_{ij}-\ell}(B_i) B_j v_{\ell}(B_i)= 0,
\end{align*}  
where 
\begin{align*}
  w_m(x)=  \frac{1}{(2b_i)^m}H_m(b_ix;q_i^2),\qquad
  v_m(x)=  \frac{1}{(2b_i)^m}H_m(b_ix;q_i^{-2}).
\end{align*}
}

The paper is organized as follows. Section 2 contains background material on multivariate orthogonal polynomials and the statements 
of our results on bivariate continuous $q$-Hermite polynomials. Section 3 contains the proof of these results. In 
Section 4 we recall the isomorphism theorem from \cite{a-KY19p} identifying quantum symmetric pair coideal subalgebras with star products on partial 
bosonizations of pre-Nichols algebras. Then we use the algebraic facts on the bivariate continuous $q$-Hermite polynomials to derive
the defining relations of quantum symmetric pair coideal subalgebras of quantum groups in the quasi-split Kac--Moody case.
\\ \hfill \\
\noindent
{\bf Acknowledgements.} We are grateful to the referee for the detailed comments
which helped us to improve the exposition. The research of W.R.C. was supported by a 2018 AMS-Simons Travel Grant.
The research of M.Y. was supported by NSF grant DMS-1901830 and Bulgarian Science Fund grant DN02/05.
%%%%%%%%%%%%%%%%%%%%%%%%%%%%%%%%%%%%%%%%%%%%%%%%%%%%%%%%%%%%%%%%%%%%%%%%%%%%%%%%%%%%
\section{Orthogonal polynomials}\label{sec:background orthogonal polynomials}
%%%%%%%%%%%%%%%%%%%%%%%%%%%%%%%%%%%%%%%%%%%%%%%%%%%%%%%%%%%%%%%%%%%%%%%%%%%%%%%%%%%%
In this section, we provide a brief review of the theory of orthogonal polynomials in a single variable and introduce the Hermite and continuous $q$-Hermite polynomials as examples.
We then recall the definition of multivariate orthogonal polynomials and introduce a bivariate analog of the continuous $q$-Hermite polynomials.
%%%%%%%%%%%%%%%%%%%%%%%%%%
\subsection{Classical Orthogonal Polynomials}\label{sec:background classical}
%%%%%%%%%%%%%%%%%%%%%%%%%%
A sequence of orthogonal polynomials on the real line is a sequence $p_0(x),p_1(x),\dots$ of complex-valued polynomials with $\deg p_n(x) = n$ for all $n\geq 0$, which satisfy the orthogonality condition
$$\int_{\bbr}p_m(x)\ol{p_n(x)} d\mu(x) = \delta_{m,n}c_n$$
for some positive Borel measure $\mu$ on $\bbr$ and sequence of positive constants $\{c_n\}_{n=0}^\infty$.

An elementary argument shows that any sequence of orthogonal polynomials automatically satisfies a three-term recursion relation of the form
\begin{equation}\label{three-term recursion}
xp_n(x) = \alpha_np_{n+1}(x) + \beta_np_n(x) + \gamma_np_{n-1}(x),
\end{equation}
for some sequence of constants $\{\alpha_n\}_{n=0}^\infty,\{\beta_n\}_{n=0}^\infty$ and $\{\gamma_n\}_{n=1}^\infty$, with $p_{-1}(x) := 0$.
The values are related to the moments of $\mu(x)$ and the leading coefficients of the $p_n(x)$'s.
Conversely, for any sequences $\{\alpha_n\}_{n=0}^\infty,\{\beta_n\}_{n=0}^\infty$ and $\{\gamma_n\}_{n=1}^\infty$ with $\beta_n$ real and $\alpha_n\gamma_n$ positive, the sequence of polynomials defined recursively by \eqref{three-term recursion} will be orthogonal polynomials for some Borel measure $\mu(x)$.
This result is known as Favard's theorem and is a consequence of the spectral theorem applied to the semi-infinite Jacobi matrix defined by the three-term recursion relation \cite{favard}.

The most fundamental examples of orthogonal polynomials are the classical orthogonal polynomials of Hermite, Laguerre, and Jacobi.
These polynomials satisfy the additional property that they are eigenfunctions of a second-order differential equation in the variable $x$, i.e.
$$a_2(x)p_n''(x) + a_1(x)p_n'(x) + a_0(x)p_n(x) = \lambda_np_n(x)$$
for some functions $a_0(x),a_1(x)$ and $a_2(x)$ and sequence of complex numbers $\{\lambda_n\}_{n=0}^\infty$.
Each sequence of classical orthogonal polynomials satisfies a Rodrigues-type recurrence relation and has a nice generating function formula.
For example consider the classical Hermite polynomials $H_n(x)$ defined by
\begin{equation}\label{def:hermite}
H_n(x) = n!\sum_{m=0}^{\lfloor n/2\rfloor} \frac{(-1)^m}{m!(n-2m)!}(2x)^{n-2m}.
\end{equation}
\begin{eg}
The Hermite polynomials $H_n(x)$ have the following properties \cite{koekoek2010}:
\begin{itemize}
\item  orthogonality relation:
$$\int_\bbr H_m(x)H_n(x)e^{-x^2}dx = \sqrt{\pi}2^nn!\delta_{m,n}$$
\item  three-term recursion relation:
$$xH_n(x) = \frac{1}{2}H_{n+1}(x) + nH_{n-1}(x)$$
\item  second-order differential equation:
$$H_n''(x) -2xH_n'(x) = -2nH_n(x).$$
\item  generating function:
$$e^{2xt-t^2} =\sum_{n=0}^\infty \frac{H_n(x)}{n!}t^n$$
\item  Rodrigues-type recurrence relation:
$$H_n(x) = (-1)^ne^{x^2}\left(\frac{d}{dx}\right)^n\cdot e^{-x^2}$$
\end{itemize}
\end{eg}

The classical orthogonal polynomials naturally generalize when we replace the differential operator with a second-order difference or $q$-difference operator, in which we obtain the various families obtained from the Askey and $q$-Askey scheme, such as the Wilson, Racah, Hahn, Meixner, Meixner-Pollaczek, Krawtchouk, and Charlier polynomials and their $q$-analogues.
As before, each such sequence of orthogonal polynomials satisfies a three-term recursion relation, a differential, difference, or $q$-difference equation, a Rodrigues-type recurrence relation, and has a nice generating function formula.
In this paper, we will be particularly interested in the continuous $q$-Hermite polynomials $H_n(x;q)$ defined for $x=\cos(\theta)$ by
\begin{equation}\label{def:q-hermite}
H_n(x;q) := \sum_{k=0}^n\frac{(q;q)_n}{(q;q)_k(q;q)_{n-k}}e^{i(n-2k)\theta} = e^{in\theta}\pFq{2}{0}{q^{-n},0}{-}{q,q^ne^{-2i\theta}}.
\end{equation}
Here $(a;q)_n$ is the $q$-Pochhammer symbol and $\pFq{2}{0}{a,b}{-}{q,z}$ is the $q$-hypergeometric function defined respectively by
$$(a;q)_n = \prod_{k=0}^{n-1}(1-aq^k)\ \ \ \ \text{and}\ \ \ \ \pFq{2}{0}{a,b}{-}{q,z} = \sum_{n=0}^\infty \frac{(a;q)_n(b;q)_n}{(q;q)_n(-1)^nq^{\binom{n}{2}}}z^n.$$
We also have the infinite $q$-Pochhammer symbol $(a;q)_\infty = \lim_{n\rightarrow\infty} (a;q)_n$ which has useful series expansion we will rely on later in this paper
\begin{equation}\label{q-pochhammer series}
(a;q)_\infty = \prod_{k=0}^\infty(1-aq^k) = \sum_{n=0}^\infty \frac{(-1)^n q^{\binom{n}{2}}}{(q;q)_n}a^n.
\end{equation}

Note that in terms of the Chebyshev polynomials of the first kind $T_n(x)$, this may be rewritten as
$$H_n(x;q) := \sum_{k=0}^n\frac{(q;q)_n}{(q;q)_k(q;q)_{n-k}}T_{|n-2k|}(x),$$
so that in particular $H_n(x;q)$ is a polynomial in $x$ of degree $n$ for all $n$.
\begin{eg}
The continuous $q$-Hermite polynomials $H_n(x;q)$ satisfy the following properties \cite{koekoek2010}:
\begin{itemize}
\item  orthogonality relation:
\begin{equation}\label{q-hermite orthogonality}
\int_{-1}^1 H_m(x;q)H_n(x;q)\frac{|(e^{2i\theta};q)_\infty|^2}{\sqrt{1-x^2}}dx = \frac{2\pi\delta_{m,n}}{(q^{n+1};q)_\infty}.
\end{equation}
\item  three-term recursion relation:
\begin{equation}\label{q-hermite 3-term recursion}
2xH_n(x;q) = H_{n+1}(x;q) + (1-q^n)H_{n-1}(x;q).
\end{equation}
\item (forward) $q$-difference equation:
\begin{equation}\label{q-hermite difference}
D_qH_n(x;q) = \frac{2q^{-(n-1)/2}(1-q^n)}{1-q}H_{n-1}(x;q).
\end{equation}
\item generating function:
\begin{equation}\label{q-hermite generating}
\sum_{n=0}^\infty\frac{H_n(x;q)}{(q;q)_n}s^n = \frac{1}{|(se^{i\theta};q)_\infty|^2}.
\end{equation}
\item  Rodrigues-type recurrence relation:
\begin{equation}\label{q-hermite Rodrigues}
H_n(x;q) = \left(\frac{q-1}{2}\right)^nq^{\frac{1}{4}n(n-1)}\frac{\sqrt{1-x^2}}{|(e^{2i\theta};q)_\infty|^2}(D_q)^n\cdot\frac{|(e^{2i\theta};q)_\infty|^2}{\sqrt{1-x^2}}
\end{equation}
\end{itemize}
\end{eg}

\begin{rema}
In the $q$-difference equation above, $D_q$ is the $q$-difference operator found in \cite[Equation 1.16.4]{koekoek2010}, given by
$$D_q f(x) = \frac{\delta_q f(x)}{\delta_q x},\ \ \ \ x=\cos(\theta)$$
where here
$$\delta_q f(e^{i\theta}) = f(q^{1/2}e^{i\theta})-f(q^{-1/2}e^{i\theta}),$$
so that in particular $\delta_q x = -\frac{1}{2}q^{-1/2}(1-q)(e^{i\theta}-e^{-i\theta})$ for $x = \cos\theta$.
\end{rema}

%%%%%%%%%%%%%%%%%%%%%%%%%%
\subsection{Multivariate orthogonal polynomials}\label{sec:background multivariate}
%%%%%%%%%%%%%%%%%%%%%%%%%%
The theory of multivariate orthogonal polynomials on $\bbr^d$ is considerably more complicated than the single variable situation and far less complete.
Even so, the basics of the theory remain the same as long as the definitions are taken appropriately.
Some useful introductory references are \cite{dunkl2014,xu2005}.

For simplicity, we will adopt the vector notation $\vec x = (x_1,\dots, x_d)$ and $\vec n = (n_1,\dots, n_d)$ and will write $|\vec n|$ to mean $n_1+\dots+n_d$.
We will also use the monomial notation $x^{\vec n}$ for the product $x_1^{n_1}x_2^{n_2}\dots x_d^{n_d}$.
\begin{defi}
A sequence of orthogonal polynomials in $d$ variables is a sequence $p_{\vec n}(\vec x)$ of polynomials in variables $x_1,\dots, x_d$ such that
\begin{enumerate}[(a)]
\item for all $m$ the polynomials $\{p_{\vec n}(\vec x): |\vec n| \leq m\}$ define a basis for the space of polynomials of total degree at most $m$
\item there exists a positive Borel measure $\mu$ on $\bbr^d$ with finite moments $\int_{\bbr}|x^{\vec n}|d\mu(\vec x) <\infty$ satisfying
$$\int_{\bbr^d} p_{\vec m}(\vec x)\ol{p_{\vec n}(\vec x)} d\mu(\vec x) = 0\ \ \text{for $|\vec m|\neq |\vec n|$}.$$
\end{enumerate}
\end{defi}
In other words, polynomials of different total degrees are orthogonal, but different polynomials with the same total degree may not be.
In particular, one may have to perform a change of basis
$$\wt p_{\vec n}(x,y) = \sum_{|\vec m|=|\vec n|} a_{\vec m} p_{\vec m}(\vec x),$$
to get a sequence of orthogonal polynomials satisfying the more intuitive orthogonality condition
\begin{equation}\label{naive orthogonality}
\int_{\bbr}\wt p_{\vec m}(\vec x)\ol{\wt p_{\vec{n}}}(\vec x)d\mu(\vec x) = 0\ \ \text{when $\vec m\neq \vec n$}.
\end{equation}
to apply for all $\vec m$ and $\vec n$.

A sequence of orthogonal polynomials again gives rise to a three-term recursion relation, except that the summands are in terms of the total degree and can involve multiple polynomials with the same total degree.
Specifically, there will exist constants $\alpha_{\vec n,\vec m,j},\beta_{\vec n,\vec m,j},\gamma_{\vec n,\vec m,j}$ such that for all $j=1,\dots,d$
$$x_jp_{\vec n}(\vec x) = \sum_{|\vec m| = |\vec n| + 1}\alpha_{\vec n,\vec m,j}p_{\vec m}(\vec x) + \sum_{|\vec m| = |\vec n|} \beta_{\vec n,\vec m,j}p_{\vec m}(\vec x) + \sum_{|\vec m|=|\vec n|-1}\gamma_{\vec n,\vec m,j}p_{\vec m}(\vec x).$$
An analog of Favard's theorem has also been proved \cite{xu1993}, i.e.~for sufficiently nice sequences of constants, the sequence of polynomials defined by the three-term recursion relation will be orthogonal with respect to some measure $\mu$ on $\bbr^d$.
As mentioned above, we can then change our basis so that the orthogonal polynomials satisfy the simple orthogonality condition \eqref{naive orthogonality}, but this in turn will completely change the original recurrence relations and the new orthogonal polynomials may lose other desirable properties such as having monomial leading coefficients.

In the next section, we will construct two dimensional analogs of the continuous $q$-Hermite polynomials defined above, which we will hereafter refer to as the bivariate continuous $q$-Hermite polynomials
\begin{equation}\label{bivariate q-Hermite equation}
H_{m,n}(x,y;q,r) = \sum_{k=0}^{\min(m,n)}\frac{(-1)^kq^{\binom{k}{2}}(q;q)_m(q;q)_nr^k}{(q;q)_{m-k}(q;q)_{n-k}(q;q)_k}H_{m-k}(x;q)H_{n-k}(y;q).
\end{equation}
Note that
$$H_{m,0}(x,y;q,r) = H_m(x;q)\ \ \text{and}\ \ H_{0,n}(x,y;q,r) = H_n(y;q).$$
Moreover
$$H_{m,n}(x,y;q,0) = H_m(x;q)H_n(y;q),$$
so $H_{m,n}(x,y;q,r)$ may be thought of as a deformation of the family of orthogonal polynomials $H_m(x;q)H_n(y;q)$ with deformation parameter $r$.
\begin{rema}
Our bivariate continuous $q$-Hermite polynomials are very different from those constructed by Ismail and Zhang \cite{ismail}, which were motivated by the complex bivariate orthogonal Hermite polynomials introduced by It{\^o} \cite{ito}.
\end{rema}
\begin{rema}
We do not define the bivariate continuous $q$-Hermite polynomials with \eqref{bivariate q-Hermite equation}.
Instead, we define them in the next section in terms of a symmetry condition and a three-term recursion relation reminiscent of the recursion relation for the one variable case.
We then prove that the resulting sequence satisfies \eqref{bivariate q-Hermite equation}.
\end{rema}
In the next section we will prove several important properties of these polynomials, including orthogonality, recurrence relations, $q$-difference equations, and a generating function formulation.
We summarize these properties here for the convenience of the reader.
\begin{thm}\label{thm:summary of bivariate q-Hermite polynomials}
The bivariate continuous $q$-Hermite polynomials $H_{m,n}(x,y;q,r)$ satisfy the following properties.
\begin{itemize}
\item  Orthogonality relation:
$$\int_{-1}^1\int_{-1}^1H_{m,n}(x,y;q,r)H_{\wt n,\wt m}(x,y;q,r)\frac{|(e^{2i(\alpha+\beta)}/r;q)_\infty|^2}{\sqrt{(1-x^2)(1-y^2)}}dxdy = c_{m,n}\delta_{m,\wt m}\delta_{n,\wt n},$$
where here $x = \cos(2\alpha)$, $y=\cos(2\beta)$ and
$$c_{m,n} = \frac{2\pi^2}{(q;q)_\infty}\sum_{\substack{i+k+\ell=m\\j+k+\ell=n}}\frac{(-1)^kq^{\binom{k}{2}}(q;q)_m^2(q;q)_n^2}{(q;q)_i(q;q)_j(q;q)_k(q;q)_\ell^2}r^{m+n}.$$
\item Three-term recursion relations:
\begin{align*}
2xH_{m,n}(x,y;q,r) = H_{m+1,n}(x,y;q,r) & + (1-q^m)H_{m-1,n}(x,y;q,r)\\\nonumber
               & + q^m(1-q^n)rH_{m,n-1}(x,y;q,r),\\
2yH_{m,n}(x,y;q,r) = H_{m,n+1}(x,y;q,r) & + (1-q^n)H_{m,n-1}(x,y;q,r)\\\nonumber
               & + q^n(1-q^m)rH_{m-1,n}(x,y;q,r).
\end{align*}
%\item (forward) $q$-difference equations
%\begin{align*}
%D_{q,x}\cdot H_{m,n}(x,y) &= \frac{2q^{-(m-1)}(1-q^m)}{1-q}H_{m-1,n}(x,y)\\
%D_{q,y}\cdot H_{m,n}(x,y) &= \frac{2q^{-(n-1)}(1-q^n)}{1-q}H_{m,n-1}(x,y)
%\end{align*}
\item Generating function:
$$\sum_{m,n=0}^\infty \frac{H_{m,n}(x,y;q,r)}{(q;q)_m(q;q)_n}s^mt^n  = \frac{(rst;q)_\infty}{|(se^{i\theta},te^{i\phi};q)_\infty|^2} \cdot$$
\item  Operator formulation:
$$H_{m,n}(x,y;q,r) =  \frac{1}{\left(-q^{\frac{m+n}{2}-1}\left(\frac{1-q}{2}\right)^2rD_{q,x}D_{q,y};q\right)_\infty}\cdot H_m(x;q)H_n(y;q).$$
\item  Additional relations:
\begin{align*}
D_{q,x}\cdot H_{m,n}(x,y;q,r) &= \frac{2q^{-\frac{m-1}{2}}(1-q^m)}{1-q}H_{m-1,n}(x,y;q,\sqrt{q}r),\\
D_{q,y}\cdot H_{m,n}(x,y;q,r) &= \frac{2q^{-\frac{n-1}{2}}(1-q^n)}{1-q}H_{m,n-1}(x,y;q,\sqrt{q}r).
\end{align*}
\end{itemize}
\end{thm}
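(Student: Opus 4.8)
The plan is to derive all five assertions from the explicit expansion \eqref{bivariate q-Hermite equation}, which I regard as already established from the defining recursion together with the symmetry $H_{m,n}(x,y;q,r)=H_{n,m}(y,x;q,r)$. Writing $c_k^{m,n}=\frac{(-1)^kq^{\binom{k}{2}}(q;q)_m(q;q)_nr^k}{(q;q)_{m-k}(q;q)_{n-k}(q;q)_k}$ so that $H_{m,n}=\sum_k c_k^{m,n}H_{m-k}(x;q)H_{n-k}(y;q)$, each property is obtained by reducing to the corresponding univariate fact for $H_\ell(\cdot;q)$.

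First I would dispatch the two three-term recursions. The first is literally the defining recursion rearranged, so nothing is needed; the second is obtained from it by the symmetry $H_{m,n}(x,y;q,r)=H_{n,m}(y,x;q,r)$, swapping the roles of $(x,m)$ and $(y,n)$. For the generating function I would substitute \eqref{bivariate q-Hermite equation} into $\sum_{m,n}\frac{H_{m,n}}{(q;q)_m(q;q)_n}s^mt^n$, cancel the factors $(q;q)_m,(q;q)_n$, and reindex by $m'=m-k$, $n'=n-k$. The triple sum then factors: the $k$-sum becomes $\sum_k\frac{(-1)^kq^{\binom{k}{2}}}{(q;q)_k}(rst)^k=(rst;q)_\infty$ by \eqref{q-pochhammer series}, while the $m'$- and $n'$-sums are exactly the univariate generating function \eqref{q-hermite generating}, giving $\frac{1}{|(se^{i\theta};q)_\infty|^2}$ and $\frac{1}{|(te^{i\phi};q)_\infty|^2}$, which assembles to the claimed formula.

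For the operator formulation I would expand $\frac{1}{(T;q)_\infty}=\sum_{k\ge0}\frac{T^k}{(q;q)_k}$ with $T=-q^{\frac{m+n}{2}-1}\left(\frac{1-q}{2}\right)^2rD_{q,x}D_{q,y}$ and apply it to $H_m(x;q)H_n(y;q)$, using the univariate $q$-difference equation \eqref{q-hermite difference} in each variable. The point to check carefully is that the $j$-th application of $T$ contributes the factor $-rq^{\,j-1}(1-q^{m-j+1})(1-q^{n-j+1})$: the fixed prefactor $q^{\frac{m+n}{2}-1}$ combines with the $q$-powers produced by $D_{q,x},D_{q,y}$ acting on the already-lowered $H_{m-j+1}H_{n-j+1}$ to leave $q^{\,j-1}$, and $\prod_{j=1}^k q^{\,j-1}=q^{\binom{k}{2}}$ reproduces exactly the $q^{\binom{k}{2}}$ in \eqref{bivariate q-Hermite equation}. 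The two difference relations are proved the same way in a single step: apply $D_{q,x}$ termwise to \eqref{bivariate q-Hermite equation}, use \eqref{q-hermite difference} on $H_{m-k}(x;q)$, and observe that the emergent $q^{-(m-k-1)/2}=q^{-(m-1)/2}q^{k/2}$ turns $r^k$ into $(\sqrt{q}\,r)^k$ after the index shift, which is precisely the replacement $r\mapsto\sqrt{q}\,r$; the factor $q^{-(m-1)/2}(1-q^m)$ pulls out front, giving $\frac{2q^{-(m-1)/2}(1-q^m)}{1-q}H_{m-1,n}(x,y;q,\sqrt{q}\,r)$.

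The genuinely analytic statement is the orthogonality, and this is where I expect the real work. My plan is a generating-function computation: with $x=\cos\theta$, $y=\cos\phi$ the coupling factor becomes $W(\theta,\phi)=|(e^{i(\theta+\phi)}/r;q)_\infty|^2$ and the measure becomes $W\,d\theta\,d\phi$, and since $F(s,t):=\sum\frac{H_{m,n}}{(q;q)_m(q;q)_n}s^mt^n$ is real I would evaluate the single object $J=\iint F(s,t)F(s',t')\,W\,d\theta\,d\phi$. Matching coefficients of $s^mt^ns'^{m'}t'^{n'}$ reduces the entire statement — both the vanishing of off-diagonal pairings and the value of $c_{m,n}$ — to one closed-form evaluation of $J$; the swapped indexing $H_{\wt n,\wt m}$ is explained by this, since $J$ organizes naturally into powers of $st'$ and $ts'$, forcing nonzero pairings to match $(m,n)$ with $(n,m)$. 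To evaluate $J$ I would expand the coupling factors $(e^{\pm i(\theta+\phi)}/r;q)_\infty$ via \eqref{q-pochhammer series} and assemble the resulting univariate integrals through the $q$-Hermite orthogonality \eqref{q-hermite orthogonality} (equivalently an Askey–Wilson-type integral such as $\int\frac{|(e^{2i\theta};q)_\infty|^2}{|(se^{i\theta},s'e^{i\theta};q)_\infty|^2}\,d\theta=\frac{2\pi}{(q;q)_\infty(ss';q)_\infty}$). The main obstacle is precisely here: the coupling factor ties the two angular variables together, so the double integral is \emph{not} a product, and one must control the cross terms produced by the $q$-Pochhammer expansion carefully before resumming them into the constrained sum defining $c_{m,n}$. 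Managing this index bookkeeping — in particular producing the constraints $i+k+\ell=m$, $j+k+\ell=n$ and the single factor $(q;q)_\infty$ in the normalization — is the delicate heart of the argument.
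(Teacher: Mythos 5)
Your algebraic items are essentially correct. The two recursions are indeed the definition plus the symmetry; your per-step factor $-rq^{\,j-1}(1-q^{m-j+1})(1-q^{n-j+1})$ in the operator formulation is exactly right (its product reproduces $q^{\binom{k}{2}}$ and $(q;q)_m/(q;q)_{m-k}$, matching the paper's computation of $D_{q,x}^kD_{q,y}^k H_mH_n$), and your termwise derivation of the $r\mapsto \sqrt{q}\,r$ difference relations is a legitimate variant of the paper's trick $L_k(q,r)=L_{k-1}(q,\sqrt{q}r)$. One logical caveat: you take the expansion \eqref{bivariate q-Hermite equation} as ``already established'' and derive the generating function from it, whereas the paper runs the implication the other way: it derives from \eqref{eq:Hmn-recursion} the $q$-difference equation $\psi\binom{x,y}{qs,t}=\frac{2xs-s^2-1}{rst-1}\psi\binom{x,y}{s,t}$, solves it to get \eqref{bivariate q-hermite generating}, and only then reads off \eqref{bivariate in terms of univariate}. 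If you insist on your order you must first prove \eqref{bivariate q-Hermite equation} directly (say by induction, checking that its right-hand side satisfies \eqref{eq:Hmn-recursion} and the symmetry); as written, half of your argument rests on an unproven lemma that the paper obtains precisely from the step you are trying to replace.

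The genuine gap is in the orthogonality, and it is not mere bookkeeping. In your coordinates $x=\cos\theta$, $y=\cos\phi$, the weight $|(e^{i(\theta+\phi)}/r;q)_\infty|^2$ couples the angles while all four Pochhammer denominators separate into functions of $\theta$ alone or $\phi$ alone; after expanding the weight by \eqref{q-pochhammer series}, the univariate integrals you actually face have the form $\int_0^\pi e^{iM\theta}\,|(se^{i\theta},s'e^{i\theta};q)_\infty|^{-2}\,d\theta$. These carry an exponential insertion, have no numerator $|(e^{2i\theta};q)_\infty|^2$, and over $[0,\pi]$ odd frequencies do not integrate to zero — so neither the $q$-Hermite orthogonality \eqref{q-hermite orthogonality} nor your quoted two-parameter Askey--Wilson evaluation applies, and the cross terms do not cancel termwise (nor can you extend to $[-\pi,\pi]^2$, since the reflection $(\theta,\phi)\mapsto(-\theta,-\phi)$ is only a joint symmetry and the cross quadrants produce the different weight $|(e^{i(\theta-\phi)}/r;q)_\infty|^2$). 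The paper resolves this with two devices absent from your plan: first, the rotation $\theta=\alpha+\beta$, $\phi=\alpha-\beta$, after which the weight $|(e^{2i\theta}/r;q)_\infty|^2$ depends on one angle only and the residual $\phi$-dependence enters through the even phases $e^{\pm 2i\phi}$, so $[0,\pi]$-Fourier orthogonality is clean; second, an $r$-shifted Askey--Wilson integral
\begin{equation*}
\int_{-\pi}^{\pi}\frac{|(e^{2i\theta}/r;q)_\infty|^2}{|(ae^{i\theta},be^{i\theta},ce^{i\theta},de^{i\theta};q)_\infty|^2}\,d\theta
=\frac{4\pi(abcdr^2;q)_\infty}{(abr,acr,adr,bcr,bdr,cdr,q;q)_\infty},
\end{equation*}
proved by shifting the contour by $-(i/2)\ln r$, valid for $r>\max(|a|,|b|,|c|,|d|)^2$ — an analytic hypothesis your sketch never confronts. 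Your structural insight that $J$ organizes into powers of $st'$ and $ts'$, forcing the swapped pairing $(m,n)\leftrightarrow(n,m)$, is exactly the paper's mechanism; but the ``delicate heart'' you defer is precisely this missing key lemma, so the orthogonality portion of your proposal does not go through as stated.
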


%%%%%%%%%%%%%%%%%%%%%%%%%%%%%%%%%%%%%%%%%%%%%%%%%%%%%%%%%%%%%%%%%%%%%%%%%%%%%%%%%%%%
\section{Generating function,  recursion relations and orthogonality}\label{sec:recursion}
%%%%%%%%%%%%%%%%%%%%%%%%%%%%%%%%%%%%%%%%%%%%%%%%%%%%%%%%%%%%%%%%%%%%%%%%%%%%%%%%%%%%
In this section we define the bivariate continuous $q$-Hermite polynomials and prove the properties stated in Theorem \ref{thm:summary of bivariate q-Hermite polynomials}.
Excepting the initial definition of the bivariate continuous $q$-Hermite polynomials below, we will write $H_{m,n}(x,y)$ in place of $H_{m,n}(x,y;q,r)$ throughout this section for sake of brevity.
%%%%%%%%%%%%%%%%%%%%%%%%%%
\subsection{The bivariate continuous $q$-Hermite polynomials}\label{sec:bivariate}
%%%%%%%%%%%%%%%%%%%%%%%%%%
As a two-dimensional analog of the continuous $q$-Hermite polynomials, we consider the following sequence of bivariate polynomials.
\begin{defi}\label{defi:bivariate hermite}
The bivariate continuous $q$-Hermite polynomials are the unique sequence of orthogonal polynomials $H_{m,n}(x,y;q,r)$ defined for all integers $m,n\geq 0$ satisfying the symmetry condition
\begin{equation}\label{poly symmetry}
H_{m,n}(x,y;q,r) = H_{n,m}(y,x;q,r)
\end{equation}
as well as the three-term recursion relation
\begin{align}\label{eq:Hmn-recursion}
2xH_{m,n}(x,y;q,r)
  & = H_{m+1,n}(x,y;q,r)\\\nonumber
  & + (1-q^m)H_{m-1,n}(x,y;q,r)\\\nonumber
  & + q^m(1-q^n)rH_{m,n-1}(x,y;q,r),
\end{align}
with $H_{0,0}(x,y;q,r) = 1$ and $H_{-1,0}(x,y;q,r) = H_{0,-1}(x,y;q,r) = 0$.
\end{defi}
Note in particular $H_{m,n}(x,y;q,r)$ is a polynomial of bidegree $(m,n)$, and that $H_{m,0}(x,y;q,r) = H_m(x;q)$.

Mimicking the generating function in the single-variable case \eqref{q-hermite generating}, we consider the function
$$\psi\binom{x,y}{s,t} :=  \sum_{m,n=0}^\infty \psi_{m,n}\binom{x,y}{s,t},\ \ \text{where}\ \ \psi_{m,n}\binom{x,y}{s,t} := \frac{H_{m,n}(x,y)s^mt^n}{(q;q)_m(q;q)_n}.$$
Note that the recursion relation above tells us
\begin{align*}
2x\psi_{m,n}\binom{x,y}{s,t}
  & = \frac{1}{s}\left(\psi_{m+1,n}\binom{x,y}{s,t} - \psi_{m+1,n}\binom{x,y}{qs,t}\right)\\
  & + s\psi_{m-1,n}\binom{x,y}{s,t} + rt\psi_{m,n-1}\binom{x,y}{qs,t}.
\end{align*}
Summing this, we find
$$2x\psi\binom{x,y}{s,t} = \frac{1}{s}\left(\psi\binom{x,y}{s,t} - \psi\binom{x,y}{qs,t}\right) + s\psi\binom{x,y}{s,t} + rt\psi\binom{x,y}{qs,t},$$
which simplifies to the homogeneous $q$-difference equation
$$\psi\binom{x,y}{qs,t} = \left(\frac{2xs-s^2-1}{rst-1}\right)\psi\binom{x,y}{s,t}.$$
Factoring
$$\left(\frac{2xs-s^2-1}{rst-1}\right) = \frac{(1-(x+\sqrt{x^2-1})s)(1-(x-\sqrt{x^2-1})s)}{1-rst}$$
and using the fact that $(qx;q)_\infty(1-x) = (x;q)_\infty$, we see that the general solution of this $q$-difference equation is
\begin{align*}
\psi\binom{x,y}{s,t}
  & = \frac{(rst;q)_\infty}{((x + \sqrt{x^2-1})s;q)_\infty((x-\sqrt{x^2-1})s;q)_\infty}\psi\binom{x,y}{0,t}\\
  & = \frac{(rst;q)_\infty}{|(e^{i\theta}s;q)_\infty|^2}\psi\binom{x,y}{0,t},\ \ \text{for $x = \cos(\theta)$}.
\end{align*}
Finally by symmetry and the choice that $H_{0,0}(x,y) = 1$, or alternatively by using \eqref{q-hermite generating}, we obtain a generating function formula for the polynomials $H_{m,n}(x,y)$ with $x = \cos(\theta)$ and $y = \cos(\phi)$
\begin{equation}\label{bivariate q-hermite generating}
\psi\binom{x,y}{s,t} := \sum_{m,n=0}^\infty \frac{H_{m,n}(x,y)}{(q;q)_m(q;q)_n}s^mt^n  = \frac{(rst;q)_\infty}{|(se^{i\theta},te^{i\phi};q)_\infty|^2}.
\end{equation}

The generating function equation also allows us to express our bivariate continuous $q$-Hermite polynomials in terms of the continuous $q$-Hermite polynomials in a single variable.
By applying \eqref{q-hermite generating} along with the series expansion for the $q$-Pochhammer symbol \eqref{q-pochhammer series} for $(rst;q)_\infty$ we see
$$\frac{(rst;q)_\infty}{|(se^{i\theta},te^{i\phi};q)_\infty|^2} = \sum_{k,m,n=0}^\infty \frac{(-1)^kq^{\binom{k}{2}}r^ks^{m+k}t^{m+k}}{(q;q)_k(q;q)_m(q;q)_n}H_m(x)H_n(y).$$
Comparing similar powers of $s$ and $t$, we find
\begin{equation}\label{bivariate in terms of univariate}
H_{m,n}(x,y) = \sum_{k=0}^{\min(m,n)}\frac{(-1)^kq^{\binom{k}{2}}(q;q)_m(q;q)_nr^k}{(q;q)_{m-k}(q;q)_{n-k}(q;q)_k}H_{m-k}(x;q)H_{n-k}(y;q).
\end{equation}

%%%%%%%%%%%%%%%%%%%%%%%%%%
\subsection{Orthogonality}
%%%%%%%%%%%%%%%%%%%%%%%%%%
By Xu's extension of Favard's theorem \cite{xu1993}, we expect a sequence of multivariate polynomials with $n$ variables which satisfies a sufficiently nice three-term recursion relation to be orthogonal with respect to some inner product defined by a measure on $\bbr^n$.
This is indeed the case for the bivariate continuous $q$-Hermite polynomials we defined, as we prove in the following theorem.
\begin{thm}\label{thm:orthogonality}
The bivariate continuous $q$-Hermite polynomials satisfy the orthogonality relation
$$\int_{-1}^1\int_{-1}^1H_{m,n}(u,v;q,r)H_{\wt n,\wt m}(u,v;q,r)\frac{|(e^{2i(\alpha+\beta)}/r;q)_\infty|^2}{\sqrt{(1-u^2)(1-v^2)}}dudv = c_{m,n}\delta_{m,\wt m}\delta_{n,\wt n},$$
where here $u = \cos(2\alpha)$, $v = \cos(2\beta)$ and 
\begin{equation}\label{norm squared equation}
c_{m,n} = \frac{2\pi^2}{(q;q)_\infty}\sum_{\substack{i+k+\ell=m\\j+k+\ell=n}}\frac{(-1)^kq^{\binom{k}{2}}(q;q)_m^2(q;q)_n^2}{(q;q)_i(q;q)_j(q;q)_k(q;q)_\ell^2}r^{m+n}.
\end{equation}
\end{thm}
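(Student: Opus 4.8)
The plan is to prove the orthogonality relation and the norm formula \eqref{norm squared equation} simultaneously, by packaging all of the integrals
$$I_{m,n,\wt m,\wt n} := \int_{-1}^1\int_{-1}^1 H_{m,n}(u,v)H_{\wt n,\wt m}(u,v)\frac{|(e^{2i(\alpha+\beta)}/r;q)_\infty|^2}{\sqrt{(1-u^2)(1-v^2)}}\,du\,dv$$
into a single generating function and reducing the theorem to one $q$-integral identity. Multiplying $I_{m,n,\wt m,\wt n}$ by $s^mt^n{s'}^{\wt n}{t'}^{\wt m}/\bigl((q;q)_m(q;q)_n(q;q)_{\wt n}(q;q)_{\wt m}\bigr)$ and summing over all four indices, the two polynomial factors assemble, by the generating function \eqref{bivariate q-hermite generating}, into the product $\psi\binom{u,v}{s,t}\,\psi\binom{u,v}{s',t'}$. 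First I would pass to the angular variables with $u=\cos\theta$, $v=\cos\phi$, under which $du\,dv/\sqrt{(1-u^2)(1-v^2)}$ becomes $d\theta\,d\phi$ on $[0,\pi]^2$ and the weight reduces to a single coupling factor. This turns the generating function of the $I$'s into an explicit double integral of a ratio of infinite $q$-Pochhammer symbols.

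Because the monomials $s^mt^n{s'}^{\wt n}{t'}^{\wt m}$ are linearly independent, it then suffices to show that this double integral equals the generating function of the right-hand side $c_{m,n}\delta_{m,\wt m}\delta_{n,\wt n}$; matching coefficients yields both the vanishing of the off-diagonal integrals (orthogonality) and the explicit value \eqref{norm squared equation} at once. I would compute the target generating function $\sum_{m,n} c_{m,n}(st')^m(ts')^n/\bigl((q;q)_m^2(q;q)_n^2\bigr)$ directly: substituting \eqref{norm squared equation} and reindexing by $i,j,k,\ell$ causes the factors $(q;q)_m^2(q;q)_n^2$ to cancel, and the resulting sums collapse --- via the $q$-binomial theorem $\sum_i z^i/(q;q)_i = 1/(z;q)_\infty$ together with the series \eqref{q-pochhammer series} --- into a closed product times one non-elementary ($q$-Bessel-type) factor $\sum_\ell z^\ell/(q;q)_\ell^2$. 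This is the shape one should expect to recover from the integral, and its non-elementary factor explains why $c_{m,n}$ retains a genuine sum rather than a closed product.

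To evaluate the double integral I would expand the coupling factor by \eqref{q-pochhammer series} and each single-variable factor $1/|(se^{i\theta};q)_\infty|^2$ by \eqref{q-hermite generating}, reducing everything to products of elementary angular integrals $\int_0^\pi H_a(\cos\theta)H_b(\cos\theta)e^{ic\theta}\,d\theta$. Using the explicit exponential form \eqref{def:q-hermite} of $H_a(\cos\theta)$, each such integral is a finite sum of $\int_0^\pi e^{iN\theta}\,d\theta$, and one expects only the frequency-matched terms $N=0$ to survive, each contributing the clean value $\pi$. The frequency-matching constraints coming from the $\theta$- and $\phi$-integrals are what should reproduce the two linear conditions $i+k+\ell=m$ and $j+k+\ell=n$ in \eqref{norm squared equation} and simultaneously annihilate all off-diagonal contributions; resumming with $q$-Vandermonde and $q$-binomial identities then reconstructs $c_{m,n}$.

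The main obstacle is precisely this integral evaluation: justifying the interchange of the multiple summation with the double integral for the absolutely convergent $q$-series, and verifying that the non-zero angular frequencies contribute nothing to the final resummation, so that the integral collapses onto the clean diagonal. In other words, the real work is to show that for the purposes of the resummation only the frequency-zero part of $\int_0^\pi H_a(\cos\theta)H_b(\cos\theta)e^{ic\theta}\,d\theta$ is relevant; controlling the remaining frequencies is the delicate point. A more computational alternative, which displays the index structure of \eqref{norm squared equation} even more transparently, is to bypass the generating function and instead substitute the explicit expansion \eqref{bivariate in terms of univariate} for both $H_{m,n}$ and $H_{\wt n,\wt m}$ before expanding the weight and performing the same angular integrals.
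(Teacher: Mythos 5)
Your global architecture does coincide with the paper's: both arguments package the integrals $I_{m,n,\wt m,\wt n}$ into a single master generating-function integral and conclude by matching coefficients of the four formal variables, and your reverse-engineered target generating function $\frac{2\pi^2(r^2zw;q)_\infty}{(rz,rw,q;q)_\infty}\sum_\ell \frac{(r^2zw)^\ell}{(q;q)_\ell^2}$ (with $z=st'$, $w=ts'$) is exactly the paper's intermediate closed form for $I$, non-elementary ${}_0\phi_1$-type factor and all. The gap is in your evaluation of the master integral, and it is fatal as stated. In your coordinates $u=\cos\theta$, $v=\cos\phi$ the weight becomes the coupling factor $|(e^{i(\theta+\phi)}/r;q)_\infty|^2$, whose expansion via \eqref{q-pochhammer series} produces exponentials $e^{iN(\theta+\phi)}$ for \emph{all} integers $N$, of both parities. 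On $[0,\pi]$ odd frequencies do not integrate to zero: $\int_0^\pi e^{iM\theta}\,d\theta = 2i/M$ for odd $M$. Since $H_a(\cos\theta)H_b(\cos\theta)$ carries only frequencies of parity $a+b$, every term with $a+b+N$ odd yields a genuinely nonzero angular integral, and these contributions do not cancel under the evenness symmetries $a\mapsto -a$, $N\mapsto -N$: summing a full sign orbit leaves a term proportional to $N^2/\bigl((N^2-a^2)(N^2-b^2)\bigr)$. So your load-bearing claim --- that only the frequency-matched $N=0$ terms survive, each contributing $\pi$ --- is false in this parametrization; you correctly flagged this as ``the delicate point'' but offered no mechanism to resolve it, and your computational alternative via \eqref{bivariate in terms of univariate} inherits the same defect. (You also gloss over the fact that the weight, depending on $\alpha+\beta$, is only well defined as a function of $(u,v)$ after the fourfold-cover symmetrization spelled out in the paper's remark.)

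The paper avoids all of this with two ingredients absent from your proposal. First, it rotates coordinates, $\theta=\alpha+\beta$, $\phi=\alpha-\beta$, so that the weight $|(e^{2i\theta}/r;q)_\infty|^2$ depends on $\theta$ alone, while the four Pochhammer factors become $(se^{i(\theta+\phi)},te^{i(\theta-\phi)},\wt s e^{i(\theta-\phi)},\wt t e^{i(\theta+\phi)};q)_\infty$-type. Second, after extending $\int_0^\pi\int_0^\pi$ to $\tfrac12\int_0^\pi\int_{-\pi}^\pi$, the $\theta$-integral is evaluated in closed form by an $r$-modified Askey--Wilson integral, which the paper derives from the classical one \cite[Theorem 10.8.1]{andrews1999} by shifting the contour by $-(i/2)\ln r$ via Cauchy's theorem, valid for $r>\max(|a|,|b|,|c|,|d|)^2$. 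Only after this collapse is the residual $\phi$-dependence carried purely by even exponentials $e^{\pm 2i\phi}$, where your mechanism $\int_0^\pi e^{2i(m-n)\phi}\,d\phi=\pi\delta_{m,n}$ is legitimate, and the $q$-binomial expansions you describe then do reconstruct $c_{m,n}$. Replacing the Askey--Wilson input by ``expand everything and resum with $q$-Vandermonde'' would amount to re-proving an identity of Askey--Wilson strength from scratch; that is the missing idea, not a routine matter of interchanging sums and integrals.
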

\begin{rema}
Note that $(\alpha,\beta)\rightarrow (\cos(2\alpha),\cos(2\beta))$ defines a fourfold cover from the diamond region $D$ with vertices $(0,0),(\pi/2,\pi/2),(\pi/2,-\pi/2)$ and $(\pi,0)$ to the triangular $T$ with vertices $(-1,-1),(-1,1),$ and $(1,1)$.
Furthermore, the map $\theta = \alpha+\beta$ and $\phi=\alpha-\beta$ maps $D$ to the square region $[0,\pi]^2$.
Thus if $I_{m,n,\wt m,\wt n}$ is the integral in Theorem \ref{thm:orthogonality}, we have by symmetry
\begin{align*}
I_{m,n,\wt m,\wt n}  & = 2\iint\limits_{T}H_{m,n}(u,v;q,r)H_{\wt n,\wt m}(u,v;q,r)\frac{|(e^{2i(\alpha+\beta)}/r;q)_\infty|^2}{\sqrt{(1-u^2)(1-v^2)}}dudv\\
   & = 2\iint\limits_{D}H_{m,n}(u,v;q,r)H_{\wt n,\wt m}(u,v;q,r)|(e^{2i(\alpha+\beta)}/r;q)_\infty|^2d\alpha d\beta\\
   & = \int_0^\pi\int_0^\pi H_{m,n}(u,v;q,r)H_{\wt n,\wt m}(u,v;q,r)|(e^{2i\theta}/r;q)_\infty|^2d\theta d\phi.
\end{align*}
where here $u=\cos(2\alpha)$, $v=\cos(2\beta)$, $\theta = \alpha+\beta$ and $\phi = \alpha-\beta$.
Thus the orthogonality expression above is equivalent to
\begin{equation}
\int_0^\pi\int_0^\pi H_{m,n}(u,v;q,r)H_{\wt n,\wt m}(u,v;q,r)|(e^{2i\theta}/r;q)_\infty|^2d\theta d\phi = c_{m,n}\delta_{m,\wt m}\delta_{n,\wt n},
\end{equation}
for $u = \cos(\theta+\phi)$ and $v = \cos(\theta-\phi)$.
\end{rema}
\begin{rema}
If we define a new sequence of polynomials
$$\wt H_{m,n}(x,y) = \left\lbrace\begin{array}{cc}
H_{m,n}(x,y)-H_{n,m}(x,y)& \text{if $m<n$},\\
H_{m,n}(x,y)+H_{n,m}(x,y)& \text{if $m\geq n$}
\end{array}\right.$$
then the new sequence satisfies the more intuitive orthogonality statement that
$$
\int_0^\pi\int_0^\pi \wt H_{m,n}(u,v;q,r)\wt H_{\wt m,\wt n}(u,v;q,r)|(e^{2i\theta}/r;q)_\infty|^2d\theta d\phi = \wt c_{m,n}\delta_{m,\wt m}\delta_{n,\wt n}
$$
for some constants $\wt c_{m,n} > 0$, but will no longer have monomial leading coefficients.
\end{rema}

\begin{proof}
To prove Theorem \ref{thm:orthogonality}, we will use the generating function formula for the bivariate continuous $q$-Hermite polynomials to deduce an orthogonality condition.
We will also make use of the Askey--Wilson integral \cite[Theorem 10.8.1]{andrews1999}
\begin{equation}
\int_{-\pi}^\pi\frac{|(e^{2i\theta};q)_\infty|^2}{|(ae^{i\theta},be^{i\theta},ce^{i\theta},de^{i\theta};q)_\infty|^2}d\theta = \frac{4\pi(abcd;q)_\infty}{(ab,ac,ad,bc,bd,cd,q;q)_\infty}.
\end{equation}
However, we require this integral in a slightly modified form.
Note that the function
$$f(z) = \frac{|(e^{2iz}/r;q)_\infty|^2}{|(ae^{iz},be^{iz},ce^{iz},de^{iz};q)_\infty|^2}$$
is, $2\pi$-periodic and holomorphic on the domain $\text{Im}(z) > \ln\max(|a|,|b|,|c|,|d|)$.
Therefore by the Cauchy residue theorem, as long as $r>\max(|a|,|b|,|c|,|d|)^2$ we have no poles in the rectangle $[-\pi,\pi]\times[-(i/2)\ln(r),0]$ and so
\begin{align*}
\int_{-\pi}^\pi f(\theta) d\theta
  & = \int_{-\pi}^\pi f(\theta-(i/2)\ln r)d\theta \\
  & + i\int_{0}^{(1/2)\ln r} f(\pi-ix)dx - i\int_{0}^{(1/2)\ln r} f(-\pi-ix)dx\\
  & = \int_{-\pi}^\pi f(\theta-(i/2)\ln r)dx.
\end{align*}
Consequently for $r>\max(|a|,|b|,|c|,|d|)^2$ we see that
\begin{equation}
\int_{-\pi}^\pi\frac{|(e^{2i\theta}/r;q)_\infty|^2}{|(ae^{i\theta},be^{i\theta},ce^{i\theta},de^{i\theta};q)_\infty|^2}d\theta = \frac{4\pi(abcdr^2;q)_\infty}{(abr,acr,adr,bcr,bdr,cdr,q;q)_\infty}.
\end{equation}
Using this, assume $\max(|s|,|t|,|\wt s|,|\wt t|)^2 < r$ and consider the integral
$$I = I(s,t,\wt s,\wt t;q,r) = \int_{0}^\pi\int_{0}^\pi\psi\binom{u,v}{s,t}\psi\binom{v,u}{\wt s,\wt t}|(e^{2i\theta}/r;q)_\infty|^2d\theta d\phi.$$
We calculate
\begin{align*}
I
  &= \int_{0}^\pi\int_{0}^\pi\frac{(rst,r\wt s\wt t;q)_\infty |(e^{2i\theta}/r;q)_\infty|^2}{|(e^{i(\theta+\phi)}s,e^{i(\theta-\phi)}t,e^{i(\theta-\phi)}\wt s, e^{i(\theta+\phi)}\wt t;q)_\infty|^2}d\theta d\phi\\
  &= \frac{1}{2}\int_{0}^\pi\int_{-\pi}^\pi\frac{(rst,r\wt s\wt t;q)_\infty |(e^{2i\theta}/r;q)_\infty|^2}{|(e^{i(\theta+\phi)}s,e^{i(\theta-\phi)}t,e^{i(\theta-\phi)}\wt s, e^{i(\theta+\phi)}\wt t;q)_\infty|^2} d\theta d\phi\\
  &= \frac{2\pi}{(q;q)_\infty}\int_{0}^\pi\frac{(rst,r\wt s\wt t,r^2s\wt st\wt t;q)_\infty}{(rst,rs\wt s,rs\wt te^{2i\phi},r\wt ste^{-2i\phi},rt\wt t,r\wt s\wt t;q)_\infty}d\phi\\
  &= \frac{2\pi(r^2s\wt st\wt t;q)_\infty}{(rs\wt s,rt\wt t,q;q)_\infty}\int_{0}^\pi\frac{1}{(rs\wt te^{2i\phi},r\wt ste^{-2i\phi};q)_\infty}d\phi\\
  &= \frac{2\pi(r^2s\wt st\wt t;q)_\infty}{(rs\wt s,rt\wt t,q;q)_\infty}\sum_{m,n=0}^\infty \int_{0}^\pi\frac{(rs\wt t)^m(r\wt st)^n}{(q;q)_m(q;q)_n}e^{2i\phi(m-n)}d\phi\\
  &= \frac{2\pi^2(r^2s\wt st\wt t;q)_\infty}{(rs\wt s,rt\wt t,q;q)_\infty}\sum_{n=0}^\infty \frac{(r^2s\wt t\wt st)^n}{(q;q)_n^2}\\
  &= \frac{2\pi^2}{(q;q)_\infty}\sum_{i,j,k,\ell} \frac{(-1)^kq^{\binom{k}{2}}(rs\wt s)^{i+k+\ell}(rt\wt t)^{j+k+\ell}}{(q;q)_i(q;q)_j(q;q)_k(q;q)_\ell^2}\\
  &= \sum_{m,n=0}^\infty \frac{c_{m,n}}{(q;q)_m^2(q;q)_n^2}(s\wt s)^m(t\wt t)^n,
\end{align*}
where the $c_{m,n}$'s are given by \eqref{norm squared equation}.
Furthermore, using the explicit series expression for $\psi\binom{x,y}{s,t}$ in terms of the continuous bivariate $q$-Hermite polynomials, we see
$$I = \sum_{m,n=0}^\infty\sum_{\wt m,\wt n=0}^\infty\frac{s^mt^n{\wt s}^{\wt m}{\wt t}^{\wt n}}{(q;q)_m(q;q)_n(q;q)_{\wt m}(q;q)_{\wt n}} I_{m,n,\wt m,\wt n}$$
for
$$I_{m,n,\wt m,\wt n} =  \int_{0}^\pi\int_{0}^\pi H_{m,n}(u,v)H_{\wt m,\wt n}(v,u)|(e^{2i\theta}/r;q)_\infty|^2d\theta d\phi.$$
Combining this with our previous expression for $I$, we find that
$$I_{m,n,\wt m,\wt n} = \delta_{m,\wt m}\delta_{n,\wt n}c_{m,n}.$$
We prove below that the constants $c_{m,n}$ are positive when $r$ and $q$ are real.
This proves the statement of Theorem \ref{thm:orthogonality}.
\end{proof}

%%%%%%%%%%%%%%%%%%%%%%%%%%
\subsection{Complex interpretation}
%%%%%%%%%%%%%%%%%%%%%%%%%%
The swapping of the variables $u$ and $v$ in the inner product expression of Theorem \ref{thm:orthogonality} is somewhat startling!
However, it is quite natural when viewed in terms of an inner product on the $2$-torus
$$\bbt^2 = \{(z,w)\in\bbc^2: |z| = |w| = 1\}.$$
To see what we mean specifically, consider the complex functions $\theta_{m,n}(z,w)$ defined on $\bbt^2$ by
\begin{align*}
\theta_{m,n}(z,w) = \sum_{k=0}^{\min(m,n)}&\left(\frac{(-1)^kq^{\binom{k}{2}}(q;q)_m(q;q)_n}{(q;q)_{m-k}(q;q)_{n-k}(q;q)_k}r^kz^{m-k}w^{n-k}\right.\\
                  &\left.\times \pFq{2}{0}{q^{k-m},0}{-}{q,q^{m-k}z^{-2}}\pFq{2}{0}{q^{k-n},0}{-}{q,q^{n-k}w^{-2}}\right).
\end{align*}
These are complex bivariate trigonometric polynomials on $\bbt^2$.
Note in particular
$$\theta_{m,n}(e^{i\theta},e^{i\phi}) = H_{m,n}(x,y),\ \ \text{for $x =\cos(\theta)$ and $y=\cos(\phi)$}.$$
With this in mind the inner product expression of Theorem \ref{thm:orthogonality} becomes
\begin{equation}
\frac{1}{4}\iint\limits_{\bbt^2}\theta_{m,n}(zw,z\ol w)\ol{\theta_{\wt m,\wt n}(zw,z\ol w)}|(z^2/r;q)_\infty|^2d\lvert z\rvert d\lvert w\rvert = c_{m,n}\delta_{m,\wt m}\delta_{n,\wt n}.
\end{equation}
In this way, we can see that the inner product expression from Theorem \ref{thm:orthogonality} is actually a Hermitian inner product on $L^2(\bbt^2)$.
In particular, when $r$ and $q$ are real the coefficients $c_{m,n}$ are necessarily positive as they are Hermitian inner products of polynomials with respect to an absolutely continuous positive measure whose support contains a dense open subset of $\bbt^2$.

%%%%%%%%%%%%%%%%%%%%%%%%%%
\subsection{Additional properties}
%%%%%%%%%%%%%%%%%%%%%%%%%%
Equation \eqref{bivariate in terms of univariate} combined with the $q$-difference equation for the Hermite polynomials \eqref{q-hermite difference} immediately tells us a simple operator identity relating the polynomials $H_{m,n}(x,y)$ to $H_m(x;q)H_n(y;q)$.
Specifically, we can write $H_{m,n}(x,y)$ as a certain differential operator of infinite order acting on the product $H_m(x;q)H_n(x;q)$, namely
\begin{equation}\label{operator relation}
H_{m,n}(x,y;q,r) =  \frac{1}{\left(-q^{\frac{m+n}{2}-1}\left(\frac{1-q}{2}\right)^2rD_{q,x}D_{q,y};q\right)_\infty}\cdot H_m(x;q)H_n(y;q).
\end{equation}
To see this, note that
$$D_{q,x}^kH_m(x;q) = \frac{2^k}{(1-q)^k}q^{-\frac{1}{2}\left(mk-\binom{k}{2}-k\right)}\frac{(q;q)_m}{(q;q)_{m-k}}H_{m-k}(x;q),$$
so that
\begin{align*}
D_{q,x}^k&D_{q,y}^kH_m(x;q)H_n(y;q)\\
  & = \frac{2^{2k}}{(1-q)^{2k}}q^{-\left(\frac{m+n}{2}k-\binom{k}{2}-k\right)}\frac{(q;q)_m(q;q)_n}{(q;q)_{m-k}(q;q)_{n-k}}H_{m-k}(x;q)H_{n-k}(x;q).
\end{align*}
Therefore
\begin{align*}
H_{m,n}(x,y)
  & = \sum_{k=0}^{\min(m,n)} \frac{\left(-q^{\frac{m+n}{2}-1}\left(\frac{1-q}{2}\right)^2rD_{q,x}D_{q,y}\right)^k}{(q;q)_k}\cdot H_m(x;q)H_n(y;q)\\
  & = \sum_{k=0}^{\infty} \frac{\left(-q^{\frac{m+n}{2}-1}\left(\frac{1-q}{2}\right)^2rD_{q,x}D_{q,y}\right)^k}{(q;q)_k}\cdot H_m(x;q)H_n(y;q)\\
  & =  \frac{1}{\left(-q^{\frac{m+n}{2}-1}\left(\frac{1-q}{2}\right)^2rD_{q,x}D_{q,y};q\right)_\infty}\cdot H_m(x;q)H_n(y;q).
\end{align*}
Note that the second equality is due to the fact that $D_{q,x}^kD_{q,y}^k H_m(x;q)H_n(y;q)$ is zero for $k>\min(m,n)$, so all the additional terms appearing in the sum are just zero.

The generating function formula \eqref{bivariate q-hermite generating} along with the operator formula \eqref{operator relation} combined with properties of the continuous $q$-Hermite polynomials in the single-variable case, immediately guarantee certain nice recurrence relations for the bivariate continuous $q$-Hermite polynomials.
We list some of these in the next proposition.
\begin{prop}
The bivariate continuous $q$-Hermite polynomials satisfy the following equations
\begin{align}
D_{q,x}\cdot H_{m,n}(x,y;q,r) &= \frac{2q^{-\frac{m-1}{1}}(1-q^m)}{1-q}H_{m-1,n}(x,y;q,\sqrt{q}r),\\
D_{q,y}\cdot H_{m,n}(x,y;q,r) &= \frac{2q^{-\frac{n-1}{1}}(1-q^n)}{1-q}H_{m,n-1}(x,y;q,\sqrt{q}r).
\end{align}
\end{prop}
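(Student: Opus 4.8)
The plan is to deduce both identities from the operator formula \eqref{operator relation} together with the single-variable $q$-difference equation \eqref{q-hermite difference}, the second identity then following from the first by the symmetry \eqref{poly symmetry}. Write $B_{m,n} = -q^{\frac{m+n}{2}-1}\left(\frac{1-q}{2}\right)^2 r D_{q,x}D_{q,y}$, so that \eqref{operator relation} reads $H_{m,n}(x,y;q,r) = (B_{m,n};q)_\infty^{-1}\cdot H_m(x;q)H_n(y;q)$, where the operator is understood as the power series $\sum_{k\ge 0}B_{m,n}^k/(q;q)_k$, which truncates when applied to a polynomial.

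First I would observe that, since $B_{m,n}$ is a scalar multiple of $D_{q,x}D_{q,y}$ and the $q$-difference operators in the two variables commute, the operator $D_{q,x}$ commutes with $B_{m,n}$ and hence with $(B_{m,n};q)_\infty^{-1}$. Applying $D_{q,x}$ to \eqref{operator relation} and commuting it past the infinite-product operator, I can let it act directly on $H_m(x;q)H_n(y;q)$; using \eqref{q-hermite difference} this produces the scalar $\frac{2q^{-(m-1)/2}(1-q^m)}{1-q}$ times $(B_{m,n};q)_\infty^{-1}\cdot H_{m-1}(x;q)H_n(y;q)$.

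The decisive step is to recognize this last expression as $H_{m-1,n}(x,y;q,\sqrt{q}\,r)$. Indeed, by \eqref{operator relation} applied to the index $(m-1,n)$ with deformation parameter $\sqrt{q}\,r$, the relevant operator is $B' = -q^{\frac{(m-1)+n}{2}-1}\left(\frac{1-q}{2}\right)^2(\sqrt{q}\,r)D_{q,x}D_{q,y}$, and the exponent bookkeeping
\[
q^{\frac{(m-1)+n}{2}-1}\cdot q^{1/2} = q^{\frac{m+n}{2}-1}
\]
shows that $B' = B_{m,n}$ exactly: the drop $\tfrac{m+n}{2}\mapsto\tfrac{(m-1)+n}{2}$ in the $q$-exponent is compensated precisely by the factor $q^{1/2}$ coming from $r\mapsto\sqrt{q}\,r$. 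Hence $(B_{m,n};q)_\infty^{-1}\cdot H_{m-1}(x;q)H_n(y;q) = H_{m-1,n}(x,y;q,\sqrt{q}\,r)$, which yields the first identity. The second is obtained by applying the first one to $H_{n,m}(y,x;q,r)$ and invoking \eqref{poly symmetry}.

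The main obstacle is precisely this exponent bookkeeping: the appearance of $\sqrt{q}\,r$ rather than $r$ on the right-hand side is forced by the requirement that the $q$-power in $B_{m,n}$ be invariant under the simultaneous shift $(m,r)\mapsto(m-1,\sqrt{q}\,r)$, and one must track the half-integer exponents with care. As an independent check I would also verify the identity at the level of generating functions: applying $D_{q,x}$ to \eqref{bivariate q-hermite generating} and using \eqref{q-hermite generating} and \eqref{q-hermite difference} in the $x$-variable replaces $s\mapsto sq^{-1/2}$ and $r\mapsto\sqrt{q}\,r$ while introducing an overall factor $\tfrac{2s}{1-q}$; comparing coefficients of $s^m t^n$ then reproduces the stated formula.
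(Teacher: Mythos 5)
Your argument is correct and is essentially the paper's own proof: the paper likewise applies $D_{q,x}$ to the operator formulation \eqref{operator relation}, commutes it past the power series in $D_{q,x}D_{q,y}$, invokes the univariate equation \eqref{q-hermite difference}, and absorbs the index shift via the identity $L_k(q,r)=L_{k-1}(q,\sqrt{q}\,r)$, which is exactly your exponent bookkeeping $B'=B_{m,n}$. Your use of the symmetry \eqref{poly symmetry} for the second identity is a legitimate shortcut for the paper's ``similar'' argument, and your scalar $q^{-(m-1)/2}$ matches the paper's proof and Theorem \ref{thm:summary of bivariate q-Hermite polynomials} (the exponent $q^{-\frac{m-1}{1}}$ in the proposition as printed is a typo).
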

\begin{proof}
The forward difference equations follow from the operator relation \eqref{operator relation}.
In detail, define
$$L_k(q,r) = \frac{1}{\left(-q^{k/2-1}\left(\frac{1-q}{2}\right)^2rD_{q,x}D_{q,y};q\right)_\infty}$$
and notice that $L_k(q,r) = L_{k-1}(q,\sqrt{q}r)$.
Therefore by \eqref{operator relation}
\begin{align*}
D_{q,x}\cdot H_{m,n}(x,y)
  & = D_{q,x}L_{m+n}(q,r)\cdot H_m(x;q)H_n(y;q)\\
  & = q^{-\frac{m-1}{2}}\left(\frac{2}{1-q}\right)(1-q^m)L_{m+n}(q,r)\cdot H_{m-1}(x;q)H_n(y;q)\\
  & = q^{-\frac{m-1}{2}}\left(\frac{2}{1-q}\right)(1-q^m)L_{m+n-1}(q,\sqrt{q}r)\cdot H_{m-1}(x;q)H_n(y;q)\\
  & = q^{-\frac{m-1}{2}}\left(\frac{2}{1-q}\right)(1-q^m)H_{m-1,n}(x,y;q,\sqrt{q}r).
\end{align*}
The proof of the other difference equation is similar.
\end{proof}
%%%%%%%%%%%%%%%%%%%%%%%%%%%%%%%%%%%%%%%%%%%%%%%%%%%%%%%%%%
%%%%%%%%%%%%%%%%%%%%%%%%%%%%%%%%%%%%%%%%%%%%%%%%%%%%%%%%%%
\section{Defining relations for quantum symmetric pairs}
%%%%%%%%%%%%%%%%%%%%%%%%%%%%%%%%%%%%%%%%%%%%%%%%%%%%%%%%%%
We now explain how bivariate continuous $q$-Hermite polynomials appear in the theory of quantum symmetric pairs.
%%%%%%%%%%%%%%%%%%%%%%%%%%%%%%%%%%%%%%%%%%%%%%%%%%%%%%%%%%
\subsection{Quasi-split quantum symmetric pairs}\label{sec:qs-qsp}
%%%%%%%%%%%%%%%%%%%%%%%%%%%%%%%%%%%%%%%%%%%%%%%%%%%%%%%%%%
Let $\gfrak$ be a symmetrizable Kac-Moody algebra with generalized Cartan matrix $(a_{ij})_{i,j\in I}$ where $I$ is a finite set. Let $\{d_i\,|\,i\in I\}$ be a set of relatively prime positive integers such that the matrix $(d_ia_{ij})$ is symmetric. Let $\Pi=\{\alpha_i\,|\,i\in I\}$ be the set of simple roots for $\gfrak$ and let $Q=\Z\Pi$ be the root lattice. Consider the symmetric bilinear form $(\cdot,\cdot):Q\times Q \rightarrow \Z$ defined by $(\alpha_i,\alpha_j)=d_ia_{ij}$ for all $i,j\in I$. Let $\gfrak'=[\gfrak,\gfrak]$ be the derived subalgebra of $\gfrak$. We now recall the definition of the corresponding quantized enveloping algebra.

Let $\field$ be a field of characteristic zero and let $q\in \field^\times$ such that $q^{2d_i}\neq 1$ for all $i\in I$. Recall the symmetric $q$-numbers, $q$-factorials and $q$-binomial coefficients defined by
\begin{align*}
  [n]_q&=\frac{q^n-q^{-n}}{q-q^{-1}}, \qquad [n]_q^!=[n]_q [n-1]_q \cdots [2]_q [1]_q,\qquad \begin{bmatrix}n \\ m \end{bmatrix}_{q}=\frac{[n]^!_q}{[n-m]_q^!\, [m]_q^!}
\end{align*}  
for any $m,n\in\N$ with $m\le n$, see for instance in \cite[1.3.3]{b-Lusztig94}.
We abbreviate $q_i=q^{d_i}$ for any $i\in I$. For any $i,j\in I$ let $S_{ij}(x,y)$ denote the noncommutative polynomial in variables $x,y$ given by
\begin{align*}
  S_{ij}(x,y)=\sum_{n=0}^{1-a_{ij}} (-1)^n \begin{bmatrix}1-a_{ij}\\ n \end{bmatrix}_{q_i} x^{1-a_{ij}-n}y x^n.
\end{align*}  
Define $U_q(\gfrak')$ to be the $\field$-algebra with generators $E_i, F_i, K_i^{\pm 1}$ for $i\in I$ and defining relations
\begin{align}
  K_i K_j=K_j K_i,\quad K_i E_j &= q^{-(\alpha_i,\alpha_j)}E_j K_i, \quad K_iF_j = q^{-(\alpha_i,\alpha_j)}F_j K_i,\nonumber\\
  E_i F_j - F_j E_i &= \delta_{ij}\frac{K_i-K_i^{-1}}{q_i-q_i^{-1}},\label{eq:EFFE}\\
  S_{ij}(E_i,E_j)&=S_{ij}(F_i,F_j)=0 \label{eq:q-Serre}
\end{align}  
for all $i,j\in I$. The relations \eqref{eq:q-Serre} are known as the quantum Serre relations. If $q$ is not a root of unity, then $U_q(\gfrak')$ is the quantized universal enveloping algebra of $\gfrak'$ for the deformation parameter $q$ as defined in \cite{b-Lusztig94}. If $q$ is a root of unity, then $U_q(\gfrak')$ is the big quantum group of $\gfrak'$ at $q$, defined and studied by De Concini and Kac \cite{a-dCK90}. In either case $U_q(\gfrak')$ is a Hopf algebra with coproduct $\kow$ defined for all $i\in I$ by
\begin{align*}
  \kow(E_i)=E_i \ot 1 + K_i \ot E_i, \quad \kow(F_i)=F_i \ot K_i^{-1}+ 1 \ot F_i, \quad \kow(K_i)=K_i\ot K_i.
\end{align*}  
Let $\tau:I\rightarrow I$ be a bijection such that  $a_{\tau(i)\tau(j)}=a_{ij}$ for all $i,j\in I$. The diagram automorphism $\tau$ gives rise to a Lie algebra automorphism $\tau:\gfrak'\rightarrow \gfrak'$ denoted by the same symbol. Let $\omega:\gfrak'\rightarrow \gfrak'$ be the Chevalley involution as defined in \cite[(1.3.4)]{b-Kac1}. Consider the involutive Lie algebra automorphism $\theta=\tau\circ \omega$ of $\gfrak'$ and let $\kfrak'=\{x\in \gfrak'\,|\,\theta(x)=x\}$ denote the corresponding pointwise fixed Lie subalgebra. The theory of quantum symmetric pairs provides quantum group analogs of the universal enveloping algebra $U(\kfrak')$ as coideal subalgebras of $U_q(\gfrak')$. More precisely, let $H_\theta\subset U_q(\gfrak')$ denote the Hopf subalgebra generated by the elements $K_i K_{\tau(i)}^{-1}$ for all $i\in I$.
Let $\bc=(c_i)_{i\in I}\in (\field^\times)^I$ be a family of parameters such that
\begin{align}\label{eq:cond-c}
  c_i=c_{\tau(i)} \mbox{ for all $i\in I$ with $a_{i\tau(i)}=0$.}
\end{align}
We define $B_\bc$ to be the subalgebra of $U_q(\gfrak')$ generated by $H_\theta$ and the elements
\begin{align}\label{eq:Bi-def}
  B_i= F_i -c_i E_{\tau(i)} K_i^{-1} \qquad \mbox{for all $i\in I$.}
\end{align}
By definition the coproduct $\kow$ of $U_q(\gfrak')$ satisfies
\begin{align*}
  \kow(B_i) = B_i\ot K_i^{-1} + 1 \ot F_i - c_i K_{\tau(i)}K_i^{-1}\ot E_{\tau(i)} K_i^{-1}
\end{align*}
and hence $B_\bc$ is a right coideal subalgebra of $U_q(\gfrak')$, that is $\kow(B_\bc)\subseteq B_\bc\ot U_q(\gfrak')$. We call $B_\bc$ a quasi-split quantum symmetric pair coideal subalgebra of $U_q(\gfrak)$.
%%%%%%%%%%%%%%%%%%%%%%%%%%%%%%%%%%%%%%%%%%%%%%%%%
\begin{rema}
 The condition \eqref{eq:cond-c} on the parameters $\bc$ guarantees that the subalgebra $B_\bc$ has many desirable properties, see \cite[(5.9)]{a-Kolb14}, \cite[Proposition 3.1]{a-KY19p}.
\end{rema}  
%%%%%%%%%%%%%%%%%%%%%%%%%%%%%%%%%%%%%%%%%%%%%%%%%
\begin{rema}\label{rem:Satake}
  For $q$ not a root of unity, quantum symmetric pairs of Kac-Moody type were defined in \cite{a-Kolb14} depending on a pair $(X,\tau)$ where $\tau:I\rightarrow I$ is a diagram automorphism and $X$ is a subset of $I$ satisfying the admissibility conditions given in \cite[Definition 2.3]{a-Kolb14}. Following \cite{a-CLW18p} we call a quantum symmetric pair quasi-split if $X=\emptyset$. In the present paper we only consider quasi-split quantum symmetric pairs.

The definition of quantum symmetric pairs in \cite{a-Kolb14} involves a second parameter family $\bs=(s_i)_{i\in I}$. The corresponding coideal subalgebras $B_{\bc,\bs}$ are isomorphic as algebras for all $\bs$ under a map which maps generators to generators, see \cite[Theorem 7.1]{a-Kolb14}. In the present paper we are only concerned with the defining relations of $B_{\bc,\bs}$ and we hence restrict to the case $s_i=0$ for all $i\in I$.   
\end{rema}  
%%%%%%%%%%%%%%%%%%%%%%%%%%%%%%%%%%%%%%%%%%%%%%%%%
\subsection{The $\ast$-product on $H_\theta\ltimes U^-$}
%%%%%%%%%%%%%%%%%%%%%%%%%%%%%%%%%%%%%%%%%%%%%%%%%
We now recall a method devised in \cite{a-KY19p} to describe the algebra $B_\bc$ in terms of generators and relations. Let $U^-$ denote the subalgebra of $U_q(\gfrak')$ generated by all $F_i$ for $i\in I$. The algebra $U^-$ is $Q$-graded with $U^-_{-\mu}=\mathrm{span}_{\field}\{F_{i_1}\dots F_{i_m}\,|\,\sum_{j=1}^m\alpha_{i_j}=\mu\}$
for all $\mu\in Q^+=\N \Pi$, and $U^-_{-\mu}=\{0\}$ otherwise.
For any $i\in I$ let $\partial^R_i, \partial^L_i:U^-\rightarrow U^-$ denote the linear maps uniquely determined by the property that $\partial^R_i(F_j)=\partial_i^L(F_j)=\delta_{ij}$ for all $j\in I$ and
\begin{align}
   \partial^R_i(fg)&=q^{(\alpha_i,\nu)}\partial_i^R(f)g +  f\partial_i^R(g)\label{eq:partialR}\\
  \partial^L_i(fg)&=\partial_i^L(f)g + q^{(\alpha_i,\mu)} f\partial_i^L(g),\label{eq:partialL}
\end{align}
for all $f\in U^-_{-\mu}$, $g\in U^-_{-\nu}$. Consider the semidirect product $H_\theta\ltimes U^-$ which is the subalgebra of $U_q(\gfrak')$ generated by $H_\theta$ and $U^-$. The algebra $B_\bc$ is a deformation of $H_\theta\ltimes U^-$.
%%%%%%%%%%%%%%%%%%%%%%%%%%%%%%%%%%%%%%%%%%%%%%%%%
\begin{thm}\label{thm:BHU}
  (1) {\upshape\cite[Theorem 4.7, Lemma 5.2]{a-KY19p}} There exists an associative product $\ast$ on $H_\theta\ltimes U^-$ which is uniquely determined by the following properties:
      \begin{align}
        h\ast g&=hg, \qquad g\ast h=gh \qquad \mbox{for all $h\in H_\theta$, $g\in U^-$,}\label{eq:hastg}\\
        F_i\ast g&=F_ig - \frac{c_iq^{(\alpha_i,\alpha_{\tau(i)})}}{q_i-q_i^{-1}} K_{\tau(i)}K_i^{-1} \partial^L_{\tau(i)}(g) \qquad \mbox{for all $i\in  I$, $g\in U^-$.} \label{eq:Fastg}
      \end{align}
      (2)  {\upshape\cite[Corollary 5.8]{a-KY19p}} There is a uniquely determined isomorphism of algebras
      $$\psi:B_\bc\rightarrow (H_\theta\ltimes U^-,\ast)$$
such that $\psi(h)=h$ for all $h\in H_\theta$ and $\psi(B_i)=F_i$ for all $i\in I$.
\end{thm}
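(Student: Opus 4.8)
The plan is to prove both parts together by realizing $H_\theta \ltimes U^-$ as a coordinatization of $B_\bc$ via a single linear projection, and then reading off the product $\ast$ by transport of structure. Concretely, let $\pi\colon U_q(\gfrak') \to H_\theta \ltimes U^-$ be the projection determined by the triangular decomposition $U_q(\gfrak') = U^-\, U^0\, U^+$, annihilating every component that involves a positive root vector $E_i$ and projecting the Cartan part $U^0$ onto $H_\theta$. I would take $\psi := \pi|_{B_\bc}$ and let $\ast$ be the product obtained by transporting the multiplication of $B_\bc$ along $\psi$; with these choices $\psi$ is an algebra isomorphism and $\ast$ is associative by construction, so the content of the theorem is (a) that $\psi$ is a linear bijection, and (b) that the transported product satisfies the explicit formulas \eqref{eq:hastg}--\eqref{eq:Fastg}, together with uniqueness.

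First I would establish that $\psi=\pi|_{B_\bc}$ is a linear isomorphism. This rests on a Poincar\'e--Birkhoff--Witt basis for $B_\bc$, which I would obtain from a filtration of $U_q(\gfrak')$ in which the correction term $E_{\tau(i)}K_i^{-1}$ of $B_i = F_i - c_i E_{\tau(i)}K_i^{-1}$ (see \eqref{eq:Bi-def}) has strictly lower degree than $F_i$; for instance the grading with $\deg F_i = 1$, $\deg E_i = -1$ and $\deg K_i = 0$, under which all defining relations of $U_q(\gfrak')$ are homogeneous. In the associated graded each $B_i$ collapses to $F_i$, so the symbol map identifies $\gr B_\bc$ with the subalgebra generated by $H_\theta$ and the $F_i$, namely $H_\theta \ltimes U^-$, yielding the basis $\{B_{i_1}\cdots B_{i_k}\,h\}$ of $B_\bc$ matched with $\{F_{i_1}\cdots F_{i_k}\,h\}$. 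Since $\pi$ sends $B_{i_1}\cdots B_{i_k}h$ to $F_{i_1}\cdots F_{i_k}h$ plus terms of strictly lower filtration degree, its matrix in these bases is unitriangular, hence $\psi$ is bijective.

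It then remains to compute the transported product on generators. The formula \eqref{eq:hastg} is immediate, since $H_\theta$ multiplies $U^-$ inside $U_q(\gfrak')$ with no $E$-correction and $\pi$ is $H_\theta$-linear. For \eqref{eq:Fastg} I would evaluate $\psi(B_i\cdot b) = \pi(B_i b)$ where $b\in B_\bc$ is any preimage of $g\in U^-$; by the transport construction the answer depends only on $g=\pi(b)$, so one is free to compute to leading order. Expanding $B_i b = F_i b - c_i E_{\tau(i)}K_i^{-1} b$, the $F_i$-term contributes $F_i g$, while commuting $E_{\tau(i)}$ to the right through the $U^-$-letters of $g$ by means of the relation \eqref{eq:EFFE} deletes each matching $F_{\tau(i)}$ and creates a Cartan factor $\tfrac{K_{\tau(i)}-K_{\tau(i)}^{-1}}{q_{\tau(i)}-q_{\tau(i)}^{-1}}$; moving these factors past the remaining $F$'s produces exactly the $q$-weights of the skew derivation $\partial^L_{\tau(i)}$ of \eqref{eq:partialL}, while every monomial still carrying an $E_{\tau(i)}$ on the right is killed by $\pi$. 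Bookkeeping the $K$-commutations yields the coefficient $\tfrac{c_i q^{(\alpha_i,\alpha_{\tau(i)})}}{q_i - q_i^{-1}}$ and the group-like $K_{\tau(i)}K_i^{-1}$, giving \eqref{eq:Fastg}. Uniqueness is then clear: since $H_\theta$ and the $F_i$ generate $H_\theta \ltimes U^-$, formulas \eqref{eq:hastg}--\eqref{eq:Fastg} determine $x\ast y$ for all $x$ by induction on a word for $x$.

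The main obstacle is step (a): establishing the Poincar\'e--Birkhoff--Witt property for $B_\bc$, namely that the monomials $B_{i_1}\cdots B_{i_k}\,h$ both span and are linearly independent. This is where the genuine content lies and where one must invoke the nondegeneracy of the triangular decomposition of $U_q(\gfrak')$ (and, since the statement is asserted to hold at roots of unity, the corresponding basis theorem for the big quantum group of De Concini and Kac). Everything afterwards, including the identification of $\partial^L_{\tau(i)}$, is a finite commutation computation; the only point requiring care is that the contributions of the lower-order $E$-part of the preimage $b$ do indeed project away, and this is guaranteed \emph{a priori} by the transport-of-structure definition of $\ast$, which is precisely why it suffices to compute $\pi(B_i b)$ to leading order.
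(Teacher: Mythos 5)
You should first note that this paper does not prove Theorem \ref{thm:BHU} at all: both parts are imported verbatim from \cite[Theorem 4.7, Lemma 5.2, Corollary 5.8]{a-KY19p}, so the benchmark is the argument there, whose architecture is the \emph{reverse} of yours. In \cite{a-KY19p} the product $\ast$ is constructed first, directly on the partial bosonization $H_\theta\ltimes U^-$ via a coproduct-based star-product formula, with associativity proved there; the identification with $B_\bc$ comes afterwards, via the cover by the tensor algebra (the map $\eta$ of Proposition \ref{prop:eta-kernel}) and a filtration comparison. You instead take bijectivity of $\psi=\pi|_{B_\bc}$ as the starting point and define $\ast$ by transport of structure. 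That inversion concentrates the entire difficulty into your step (a), which you correctly flag as the crux but then do not actually prove: the inclusion $\gr B_\bc\supseteq H_\theta\ltimes U^-$ is trivial (symbols of generators), while the inclusion $\gr B_\bc\subseteq H_\theta\ltimes U^-$ is the real content. Your unitriangularity does give surjectivity of $\pi|_{B_\bc}$, but not injectivity: when a quantum Serre combination $\sum_M c_M F_M=0$ holds in $U^-$, you must show that $\sum_M c_M B_M\in B_\bc$ lies in the span of strictly shorter $B$-monomials, and neither ``nondegeneracy of the triangular decomposition'' nor the De Concini--Kac basis theorem delivers this --- those control $U_q(\gfrak')$, not the subalgebra $B_\bc$. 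Worse, within this paper's logic that missing statement is essentially the deformed quantum Serre relation which Theorem \ref{thm:BHU} is being used to \emph{derive} (Corollary \ref{cor:dqS-bivariate} and Theorem \ref{thm:SBB2}), so your route is circular unless you import the coideal/coproduct argument of Letzter--Kolb or the independent construction of $\ast$ from \cite{a-KY19p}.

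There is a second, more localized gap in your verification of \eqref{eq:Fastg}. The claim that contributions of the lower-order part of the preimage $b=\psi^{-1}(g)$ ``project away, and this is guaranteed \emph{a priori} by the transport-of-structure definition'' is not correct: transport of structure guarantees only that $F_i\ast g=\pi\bigl(B_i\,\psi^{-1}(g)\bigr)$ is well defined, not that one may replace $b$ by its leading term $g$. Writing $b=g+b'$ with $b'\in\ker\pi$, the components of $b'$ carrying a factor $E_j$ do die under $\pi(B_i\,\cdot\,)$, as you say; but components of the form $F_MK_\mu$ with $K_\mu\notin H_\theta$ are dangerous, since commuting $E_{\tau(i)}$ through $F_M$ via \eqref{eq:EFFE} produces Cartan factors $K_{\tau(i)}^{\pm1}K_i^{-1}K_\mu$, which can land in $H_\theta$ and survive the projection. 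Ruling this out requires a genuine weight argument tracking which cosets of the lattice of $H_\theta$ the Cartan parts of straightened elements of $B_\bc$ can occupy (one finds, e.g., that only the $K_{\tau(i)}^{-1}K_i^{-1}K_\mu$ branch is ever threatening, and that the relevant $\mu$ never occur); this is precisely the kind of bookkeeping carried out in the proof of \cite[Lemma 5.2]{a-KY19p}, not a formal consequence of well-definedness. So your computation of the coefficient $\frac{c_iq^{(\alpha_i,\alpha_{\tau(i)})}}{q_i-q_i^{-1}}K_{\tau(i)}K_i^{-1}$ is right, and your uniqueness argument by induction on words is fine, but both of the genuinely hard points --- the PBW-type bijectivity and the vanishing of the non-leading contributions --- are asserted rather than proved.
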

%%%%%%%%%%%%%%%%%%%%%%%%%%%%%%%%%%%%%%%%%%%%%%%%%%%%%%%%%%
\begin{rema}
  Property \eqref{eq:Fastg} in Theorem \ref{thm:BHU} can be replaced by the property
  \begin{align}
     g\ast F_i&=gF_i - \frac{c_{\tau(i)}q^{(\alpha_i,\alpha_{\tau(i)})}}{q_i-q_i^{-1}}\partial^R_{\tau(i)}(g) K_iK_{\tau(i)}^{-1}  \qquad \mbox{for all $i\in  I$, $g\in U^-$.} \label{eq:gastF}
  \end{align}
  The resulting algebra structure on $H_\theta\ltimes U^-$ coincides with the algebra structure obtained in Theorem \ref{thm:BHU}.(1).
\end{rema}
%%%%%%%%%%%%%%%%%%%%%%%%%%%%%%%%%%%%%%%%%%%%%%%%%%%%%%%%%%
\begin{rema}
  The coefficient in \eqref{eq:Fastg} differs from the corresponding coefficient in \cite[(4.25)]{a-KY19p}. This is due to the fact that we follow standard conventions \eqref{eq:EFFE} while \cite{a-KY19p} works with $E_iF_j-F_jE_i=\delta_{ij}(K_i-K_i^{-1})$. Moreover, our convention for the coefficient $c_i$ differs from \cite{a-KY19p} by a sign. The conventions in the present paper follow \cite{a-Kolb14} but we additionally allow $q$ to be a root of unity.   
\end{rema}  
%%%%%%%%%%%%%%%%%%%%%%%%%%%%%%%%%%%%%%%%%%%%%%%%%%%%%%%%%%
Set $V^-=\bigoplus_{i\in I} \field F_i$ and let $T(V^-)$ denote the corresponding tensor algebra. By \cite{a-KY19p} the first part of the above theorem also holds when $U^-$ is replaced by $T(V^-)$. More precisely, there exists an associative product $\circledast$ on $H_\theta\ltimes T(V^-)$ which is uniquely determined by \eqref{eq:hastg} and \eqref{eq:Fastg} or \eqref{eq:gastF} for all $h\in H_\theta$, $g\in T(V^-)$, $i\in I$ with $\ast$ replaced by $\circledast$. By construction, the canonical projection gives rise to an algebra homomorphism
\begin{align*}
  \eta:(H_\theta\ltimes T(V^-),\circledast) \rightarrow (H_\theta\ltimes U^-,\ast)
\end{align*}
of deformed algebras. 
%%%%%%%%%%%%%%%%%%%%%%%%%%%%%%%%%%%%%%%%%%%%%%%%%%%%%%%%%%%
\begin{prop}\label{prop:eta-kernel}
  {\upshape\cite[Proposition 5.9]{a-KY19p}} The kernel of the algebra homomorphism $\eta$ is generated by the quantum Serre polynomials $S_{ij}(F_i,F_j)\in T(V^-)$ for $i,j\in I$.
\end{prop}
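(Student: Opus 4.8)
The plan is to treat $(H_\theta\ltimes T(V^-),\circledast)$ and $(H_\theta\ltimes U^-,\ast)$ as filtered deformations of their undeformed counterparts and to run a filtered-to-graded comparison. The crucial structural observation is that the quantum Serre polynomial $S_{ij}(x,y)$ is homogeneous of length $2-a_{ij}$: each monomial $x^{1-a_{ij}-n}yx^n$ has length $2-a_{ij}$. Hence the relations cutting $U^-$ out of $T(V^-)$ are length-homogeneous, so both $T(V^-)$ and $U^-$ carry an $\N$-grading by length (on $U^-$ this length equals the height of the $Q^+$-degree), the canonical projection $\pi\colon T(V^-)\to U^-$ is length-preserving, and $\ker\pi$ is the ordinary two-sided ideal generated by the $S_{ij}(F_i,F_j)$. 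I would extend the length filtration to $H_\theta\ltimes T(V^-)$ and $H_\theta\ltimes U^-$ by placing $H_\theta$ in degree $0$.

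First I would record that $\circledast$ and $\ast$ are filtered products whose associated graded products are the ordinary ones. This is immediate from \eqref{eq:hastg} and \eqref{eq:Fastg}: for homogeneous $g$ of length $\ell$, the term $F_ig$ in $F_i\ast g$ has length $\ell+1$, whereas the correction $K_{\tau(i)}K_i^{-1}\partial^L_{\tau(i)}(g)$ has length $\ell-1$, since $\partial^L_{\tau(i)}$ lowers length by one. Thus $\circledast$ (resp. $\ast$) equals the ordinary product plus strictly length-lowering terms, so $\gr(H_\theta\ltimes T(V^-),\circledast)$ is the undeformed $H_\theta\ltimes T(V^-)$, and similarly for $U^-$. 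Since $\eta$ is, as a linear map, the canonical projection, it is filtered with $\gr\eta=\id\ltimes\pi$; in particular $\ker(\gr\eta)$ is exactly the ordinary ideal generated by the $S_{ij}(F_i,F_j)$.

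Next comes the easy inclusion: because $\eta|_{T(V^-)}=\pi$ and $\pi(S_{ij}(F_i,F_j))=S_{ij}(F_i,F_j)=0$ in $U^-$ by the defining quantum Serre relations \eqref{eq:q-Serre}, each $S_{ij}(F_i,F_j)$ lies in $\ker\eta$; as $\ker\eta$ is a $\circledast$-ideal, the $\circledast$-ideal $J$ generated by the $S_{ij}(F_i,F_j)$ satisfies $J\subseteq\ker\eta$. For the reverse inclusion I would induct on length. The length-$0$ part of $\ker\eta$ is $\{0\}$, since $\eta$ is the identity on $H_\theta$. Given $x\in\ker\eta$ of length $\le n$, its symbol $\sigma(x)$ lies in $\ker(\gr\eta)$, so $\sigma(x)=\sum_k a_k\,S_{i_kj_k}(F_{i_k},F_{j_k})\,b_k$ for homogeneous $a_k,b_k$ in the undeformed algebra. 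Lifting the $a_k,b_k$ to elements of the same length and forming $x'=\sum_k a_k\circledast S_{i_kj_k}(F_{i_k},F_{j_k})\circledast b_k\in J$, the point is that $\sigma(x')=\sigma(x)$: because $\circledast$ differs from the ordinary product only in lower length and the tensor-algebra factor of the associated graded is free (no leading-term collapse), the top-length term of $x'$ is precisely $\sum_k a_k S_{i_kj_k}b_k$. Then $x-x'\in\ker\eta$ has length $<n$, and by induction $x-x'\in J$, so $x\in J$.

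The main obstacle is the symbol computation underlying the inductive step: verifying that no leading-term cancellation occurs, so that $\sigma(a\circledast S\circledast b)=\sigma(a)\,S\,\sigma(b)$ for the length-homogeneous $S=S_{ij}(F_i,F_j)$. This rests on the fact that the associated graded algebra is the honest semidirect product $H_\theta\ltimes T(V^-)$, whose free tensor-algebra factor admits no leading-term collapse, together with the length-homogeneity of the Serre polynomials. A secondary point requiring care is that the presentation $U^-=T(V^-)/\langle S_{ij}(F_i,F_j)\rangle$ must be invoked uniformly, including when $q$ is a root of unity, where one uses the De Concini--Kac form; granting this presentation, the identification $\ker(\gr\eta)=\langle S_{ij}(F_i,F_j)\rangle$ and the entire comparison go through.
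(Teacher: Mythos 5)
Your proof is correct, and it is essentially the intended argument: the paper gives no internal proof of Proposition \ref{prop:eta-kernel}, quoting it from \cite[Proposition 5.9]{a-KY19p}, where star products are by definition filtered deformations whose associated graded product is the ordinary one, so the kernel comparison is carried out by exactly your filtered-to-graded induction on length. One remark: the ``leading-term collapse'' you single out as the main obstacle is automatic rather than delicate --- with your choice of homogeneous $a_k,b_k$ the degree-$n$ component of $x'$ is literally $\sum_k a_k S_{i_kj_k}b_k=\sigma(x)$, an identity of graded components requiring no freeness of $T(V^-)$; the presentation $U^-\cong T(V^-)/\langle S_{ij}(F_i,F_j)\rangle$ enters only where you say it does, in identifying $\ker(\gr\,\eta)$, and it holds also at roots of unity with $q^{2d_i}\neq 1$ by the triangular decomposition of the De Concini--Kac form.
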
  
%%%%%%%%%%%%%%%%%%%%%%%%%%%%%%%%%%%%%%%%%%%%%%%%%%%%%%%%%%%
Proposition \ref{prop:eta-kernel} and the second part of Theorem \ref{thm:BHU} together provide an effective method to obtain the defining relations for the algebra $B_\bc$. Indeed, the algebra $H_\theta\ltimes T(V^-)$ is generated over $H_\theta$ by the elements $F_i$ for $i\in I$ subject only to the relations $K_jK_{\tau(j)}^{-1} F_i = q^{-(\alpha_j-\alpha_{\tau(j)},\alpha_i)} F_i K_jK_{\tau(j)}^{-1}$. The additional relations in $B_\bc$ are obtained by rewriting the quantum Serre polynomials $S_{ij}(F_i,F_j)$ in terms of the deformed product $\circledast$ on $T(V^{-})$.

For any noncommutative polynomial $r(x_1,\dots,x_n)=\sum_J a_J \,x_{j_1}\dots x_{j_m}$ in $n$ variables with coefficients $a_J=a_{(j_1,\dots,j_m)}\in H_\theta$ and any elements $u_1,\dots,u_n\in H_\theta\ltimes T(V^-)$ we write 
\begin{align}\label{eq:*-poly}
  r(u_1 \stackrel{\circledast}{,} \dots  \stackrel{\circledast}{,} u_n ) = \sum_J a_J\, u_{j_1}\circledast \dots \circledast u_{j_m}.
\end{align}  
If $\tau(i)\neq \{i,j\}$ then \eqref{eq:Fastg} implies that $S_{ij}(F_i,F_j)=S_{ij}(F_i \stackrel{\circledast}{,} F_j)$. Hence it remains to consider the two cases $\tau(i)=i$ and $\tau(i)=j$.
%%%%%%%%%%%%%%%%%%%%%%%%%%%%%%%%%%%%%%%%%%%%%%%%%%%%%%%%%%%
\subsection{Deformed quantum Serre relations for $\tau(i)=i$}
%%%%%%%%%%%%%%%%%%%%%%%%%%%%%%%%%%%%%%%%%%%%%%%%%%%%%%%%%%%
All through this section we fix $i,j \in I$ with $\tau(i)=i\neq j$. In this case \eqref{eq:Fastg} and \eqref{eq:gastF} for $\circledast$ become
\begin{align}\label{eq:Fastg2}
  F_i\circledast g = F_ig + c \partial^L_i(g), \qquad g\circledast F= gF_i + c \partial^R_i(g)
\end{align}
where $c=-\frac{c_iq_i^2}{q_i-q_i^{-1}}$. For any polynomial $w(x,y)=\sum_{r,s}b_{rs} x^r y^s\in \field[x,y]$ and any $u_1, u_2,u_3\in H_\theta\ltimes T(V^-)$ set
\begin{align}\label{eq:curve-action}
  u_3 \curvearrowright w(u_1 \stackrel{\circledast}{,} u_2) = \sum_{r,s}b_{rs} u_1^{\circledast r} \circledast u_3 \circledast u_2^{\circledast s}.
\end{align}
%%%%%%%%%%%%%%%%%%%%%%%%%%%%%%%%%%%%%%%%%%%%%%%%%%%%%%%%%%%
\begin{lem}\label{lem:wmn}
  For any $m,n\in \N$ there exists a uniquely determined polynomial $w_{m,n}(x,y)=\sum_{r,s}b_{rs} x^r y^s\in \field[x,y]$ such that
  \begin{align*}
     F_i^m F_j F_i^n = F_j \curvearrowright w_{m,n}(F_i\stackrel{\circledast}{,} F_i).
  \end{align*}  
\end{lem}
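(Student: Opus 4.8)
The plan is to realize the operation $w \mapsto F_j \curvearrowright w(F_i \stackrel{\circledast}{,} F_i)$ as an invertible $\field$-linear map and to define $w_{m,n}$ as the preimage of $F_i^m F_j F_i^n$. First I would single out the subspace $M = \mathrm{span}_\field\{F_i^a F_j F_i^b : a,b\ge 0\}\subseteq T(V^-)$ spanned by all words containing exactly one letter $F_j$. It carries a grading by the number of $F_i$-factors, $M = \bigoplus_{N\ge 0} M_N$ with $M_N = \mathrm{span}_\field\{F_i^a F_j F_i^b : a+b = N\}$, and the monomials $F_i^a F_j F_i^b$ with $a+b=N$ form a basis of $M_N$, so $\dim_\field M_N = N+1$. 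Because $\tau(i)=i$, the constant $c = -c_i q_i^2/(q_i-q_i^{-1})$ in \eqref{eq:Fastg2} is a scalar, hence the whole calculation takes place inside $T(V^-)$ with coefficients in $\field$.

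The technical core is the degree behaviour of the skew-derivations. Using \eqref{eq:partialR}, \eqref{eq:partialL} and $\partial_i^L(F_j)=\partial_i^R(F_j)=0$, a direct computation shows that $\partial_i^L$ and $\partial_i^R$ send $M_N$ into $M_{N-1}$; concretely $\partial_i^L(F_i^a F_j F_i^b)$ is a $\field$-combination of $F_i^{a-1}F_j F_i^b$ and $F_i^a F_j F_i^{b-1}$. By \eqref{eq:Fastg2} it follows that left and right $\circledast$-multiplication by $F_i$ preserve $M$ and decompose as a degree-raising ordinary product $M_N\to M_{N+1}$ plus a degree-lowering correction $c\,\partial_i^L$ or $c\,\partial_i^R$ landing in $M_{N-1}$. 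Writing $F_i^{\circledast r}\circledast F_j\circledast F_i^{\circledast s}$ as an iterated $\circledast$-product of $F_i$'s applied on the left and right of $F_j$, associativity then shows that this element lies in $M$, with leading term $F_i^r F_j F_i^s\in M_{r+s}$ and all other terms of strictly smaller (and equal-parity) degree.

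Hence the $\field$-linear map $\Phi\colon\field[x,y]\to M$, $\Phi(x^r y^s)=F_i^{\circledast r}\circledast F_j\circledast F_i^{\circledast s}$, respects the total-degree filtration, and the induced map on associated graded sends $x^r y^s$ to $F_i^r F_j F_i^s$. Since this is a bijection of bases in each degree $N$ and both filtrations are exhaustive and bounded below, a standard filtered-to-graded argument shows $\Phi$ is a linear isomorphism. Setting $w_{m,n}:=\Phi^{-1}(F_i^m F_j F_i^n)$ produces the required, and unique, polynomial. Equivalently, one may argue by peeling off leading terms: the leading monomial of $w_{m,n}$ must be $x^m y^n$, and subtracting $\Phi(x^m y^n)$ from $F_i^m F_j F_i^n$ leaves an element of lower $F_i$-degree to which the same step is applied, giving existence and uniqueness simultaneously.

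I expect the main obstacle to be the bookkeeping of the second paragraph: checking cleanly that every $\partial_i$-correction strictly lowers the $F_i$-degree (so that $\Phi$ is triangular with unit diagonal) and that no cancellation spoils the leading term $F_i^r F_j F_i^s$. Once this triangularity is established, invertibility of $\Phi$, and therefore both the existence and the uniqueness of $w_{m,n}$, follows at once.
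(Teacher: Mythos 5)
Your proof is correct and follows essentially the same route as the paper's: the paper rewrites $F_i^mF_jF_i^n$ as a $\circledast$-polynomial homogeneous of degree one in $F_j$ (your surjectivity/peeling step, based on the same observation that \eqref{eq:Fastg2} perturbs the ordinary product by degree-lowering terms), and derives uniqueness from the freeness of the $\circledast$-subalgebra generated by $F_i,F_j$, which is precisely the linear independence your triangular map $\Phi$ establishes on the one-$F_j$ component. Your write-up is somewhat more self-contained, since you verify the triangularity with unit diagonal directly (hence the freeness-type statement you need) rather than citing it.
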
  
%%%%%%%%%%%%%%%%%%%%%%%%%%%%%%%%%%%%%%%%%%%%%%%%%%%%%%%%%%%
\begin{proof}
  By \eqref{eq:Fastg2} the noncommutative monomial $F_i^mF_j F_i^n$ can be written as a noncommutative polynomial with respect to the product $\circledast$ on $T(V^-)$. This polynomial is homogeneous of degree one in $F_j$ and hence can be written in the form $F_j \curvearrowright w_{m,n}(F_i\stackrel{\circledast}{,} F_i)$ for some polynomial $w_{m,n}(x,y)$ as in the lemma. The polynomial $w_{m,n}(x,y)$ is uniquely determined because the subalgebra of $(T(V^-),\circledast)$ generated by $F_i, F_j$ is a free algebra.  
\end{proof}
%%%%%%%%%%%%%%%%%%%%%%%%%%%%%%%%%%%%%%%%%%%%%%%%%%%%%%%%%%%
It remains to determine the polynomials $w_{m,n}(x,y)$ in the above Lemma. To this end observe that $\partial_i^L(F_i^n)=\partial^R_i(F_i^n)=(n)_{q_i^2}F_i^{n-1}$ where we use the non-symmetric quantum integer $(n)_p$ defined by $(n)_p=1+p+\dots+p^{n-1}$ for any $p\in \field$. Hence the first equation in \eqref{eq:Fastg2} and \eqref{eq:partialL} imply that
\begin{align*}
  F_i\circledast(F_i^m F_j F_i^n)= F_i^{m+1} F_j F_i^n +c (m)_{q_i^2} F_i^{m-1} F_j F_i^n + c q_i^{2m+a_{ij}} (n)_{q_i^2}F_i^m F_j F_i^{n-1}
\end{align*}
for $m,n\in \N\setminus \{0\}$. In view of Lemma \ref{lem:wmn} the above formula implies that the polynomials $w_{m,n}(x,y)$ satisfy the recursion
\begin{align}\label{eq:wmn-recursion}
  x w_{m,n}(x,y) = w_{m+1,n}(x,y) &+ c (m)_{q_i^2} w_{m-1,n}(x,y)\\
  &+ c q_i^{2m+a_{ij}} (n)_{q_i^2} w_{m,n-1}(x,y) \nonumber
\end{align}
for all $m,n\in \N\setminus \{0\}$. Similarly, using the second equation in \eqref{eq:Fastg2} and \eqref{eq:partialR} we obtain
\begin{align}
  \label{eq:wmn-recursion2}
  y w_{m,n}(x,y) = w_{m,n+1}(x,y) &+ c (n)_{q_i^2} w_{m,n-1}(x,y)\\
  &+ c q_i^{2n+a_{ij}} (m)_{q_i^2} w_{m-1,n}(x,y). \nonumber
\end{align}
The recursions \eqref{eq:wmn-recursion}, \eqref{eq:wmn-recursion2} also hold for $m=0$ or $n=0$ if we set $w_{-1,t}(x,y)=w_{s,-1}(x,y)=0$ for all $s,t\in \N$. Moreover, $w_{0,0}(x,y)=1$ as $F_j \curvearrowright 1=F_j$. The symmetry of the recursions \eqref{eq:wmn-recursion} and \eqref{eq:wmn-recursion2} implies that
\begin{align}\label{eq:wmn-symmetry}
  w_{m,n}(x,y)=w_{n,m}(y,x) \qquad \mbox{for all $m,n\in\N$.}
\end{align}  
Recall the bivariate $q$-Hermite polynomials $H_{m,n}(x,y;q,r)$ from Section \ref{sec:bivariate} which depend on two parameters $q,r$. The recursions \eqref{eq:wmn-recursion}, \eqref{eq:wmn-recursion2} imply that up to rescaling, $w_{m,n}(x,y)$ coincides with $H_{m,n}(x,y;q_i^2,q_i^{a_{ij}})$.
%%%%%%%%%%%%%%%%%%%%%%%%%%%%%%%%%%%%%%%%%%%%%%%%%%%%%%%%%%%
\begin{prop}\label{prop:wH}
  The polynomials $w_{m,n}(x,y)$ are given by
  \begin{align}\label{eq:wH}
    w_{m,n}(x,y)= \frac{H_{m,n}(b_ix,b_iy;q_i^2,q_i^{a_{ij}})}{(2b_i)^{m+n}}
  \end{align}
  where $b_i=\frac{1}{2}(q_i-q_i^{-1})c_i^{-1/2}q_i^{-1/2}$.
\end{prop}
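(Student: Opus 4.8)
The plan is to use uniqueness. By Lemma \ref{lem:wmn} and the discussion preceding the proposition, the family $w_{m,n}(x,y)$ is completely determined by the recursion \eqref{eq:wmn-recursion}, the symmetry \eqref{eq:wmn-symmetry}, the value $w_{0,0}(x,y)=1$, and the conventions $w_{-1,t}=w_{s,-1}=0$. I would therefore set $q=q_i^2$ and $r=q_i^{a_{ij}}$, define
\[
W_{m,n}(x,y) = \frac{H_{m,n}(b_ix,b_iy;q,r)}{(2b_i)^{m+n}},
\]
and show that $W_{m,n}$ satisfies exactly these same defining properties; the identity \eqref{eq:wH} then follows immediately.

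First I would reduce the whole verification to a single scalar identity. Substituting $(b_ix,b_iy)$ into the three-term recursion \eqref{eq:Hmn-recursion} for $H_{m,n}$ and dividing through by $(2b_i)^{m+n+1}$ converts the left-hand side into $x\,W_{m,n}(x,y)$ and yields
\[
x\,W_{m,n}(x,y) = W_{m+1,n}(x,y) + \frac{1-q_i^{2m}}{(2b_i)^2}\,W_{m-1,n}(x,y) + \frac{q_i^{2m}(1-q_i^{2n})q_i^{a_{ij}}}{(2b_i)^2}\,W_{m,n-1}(x,y).
\]
Comparing this with \eqref{eq:wmn-recursion} and using $(m)_{q_i^2}=(1-q_i^{2m})/(1-q_i^2)$, one checks that \emph{both} coefficient identities hold precisely when
\[
(2b_i)^2 = \frac{1-q_i^2}{c}, \qquad c=-\frac{c_iq_i^2}{q_i-q_i^{-1}}.
\]
This is the heart of the argument, and it is a direct computation from $b_i=\tfrac12(q_i-q_i^{-1})c_i^{-1/2}q_i^{-1/2}$: both sides equal $(1-q_i^2)^2/(c_iq_i^3)$, after writing $q_i-q_i^{-1}=-q_i^{-1}(1-q_i^2)$. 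With this in hand, $W_{m,n}$ satisfies \eqref{eq:wmn-recursion}.

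It then remains to match the remaining data and conclude. Since $H_{0,0}=1$ we get $W_{0,0}=1$; since $H_{-1,n}=H_{m,-1}=0$ we get $W_{-1,t}=W_{s,-1}=0$; and the symmetry \eqref{poly symmetry} of the bivariate $q$-Hermite polynomials gives $W_{m,n}(x,y)=W_{n,m}(y,x)$. Thus $W_{m,n}$ obeys the same recursion, symmetry, and boundary data that pin down $w_{m,n}$ uniquely: the $n=0$ case of \eqref{eq:wmn-recursion} determines the column $w_{m,0}$ as a one-variable recursion, the symmetry then fixes $w_{0,n}$, and a further induction on $m$ fills in the entire grid. Hence $W_{m,n}=w_{m,n}$, which is \eqref{eq:wH}. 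I expect the only genuinely delicate point to be the scalar identity $(2b_i)^2=(1-q_i^2)/c$ --- specifically, keeping track of the sign in $c$ and the half-integer power $q_i^{-1/2}$ hidden in $b_i$ --- since everything else is a formal matching of recursions together with the uniqueness argument.
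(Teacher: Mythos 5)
Your proposal is correct and takes essentially the same approach as the paper: the paper's proof sets $r_{m,n}(x,y)=(2b)^{m+n}w_{m,n}(b^{-1}x,b^{-1}y)$ and matches this against the defining recursion, symmetry and initial data of $H_{m,n}(x,y;q_i^2,q_i^{a_{ij}})$, which is your argument run in the opposite rescaling direction and hinges on the same scalar identity $\frac{4c_iq_ib_i^2}{(q_i-q_i^{-1})^2}=1$ (equivalently your $(2b_i)^2=(1-q_i^2)/c$, both sides being $(1-q_i^2)^2/(c_iq_i^3)$). Your verification of that identity and the uniqueness-by-induction step are both sound.
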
  
%%%%%%%%%%%%%%%%%%%%%%%%%%%%%%%%%%%%%%%%%%%%%%%%%%%%%%%%%%%
\begin{rema}
  The factor $b_i$ may lie in a quadratic extension of the field $\field$. However, the right hand side of \eqref{eq:wH} is still a well-defined polynomial in $\field[x,y]$ because if $x^iy^j$ appears in $H_{m,n}(x,y)$ with nonzero coefficient then $i+j\equiv m+n$ mod $2$.  
\end{rema}
%%%%%%%%%%%%%%%%%%%%%%%%%%%%%%%%%%%%%%%%%%%%%%%%%%%%%%%%%%%
\begin{proof}[Proof of Proposition \ref{prop:wH}]
  For a square root $b$ of a nonzero element in $\field$ and $m,n \in \N$ define a new polynomial $r_{m,n}(x,y)\in \field[x,y]$ by
  \begin{align*}
    r_{m,n}(x,y) = (2b)^{m+n} w_{m,n}(b^{-1}x,b^{-1}y).
  \end{align*}
  The recursion \eqref{eq:wmn-recursion} for $w_{m,n}$ is equivalent to
  \begin{align*}
    2x r_{m,n}(x,y)= r_{m+1,n}(x,y) &+ 4 c b^2 \frac{q_i^{2m}-1}{q_i^2-1}r_{m-1,n}(x,y)\\
    &+ 4cb^2  q_i^{2m+a_{ij}}\frac{q_i^{2n}-1}{q_i^2-1}r_{m,n-1}(x,y).
  \end{align*}
  Recall that $c=-\frac{c_iq_i^2}{q_i-q_i^{-1}}$ and hence the above recursion can be rewritten as
  \begin{align*}
    2xr_{m,n}(x,y) = r_{m+1,n}(x,y) &+ \frac{4c_iq_ib^2}{(q_i-q_i^{-1})^2}(1-q_i^{2m})r_{m-1,n}(x,y) \\
    &+ \frac{4c_i q_i b^2}{(q_i-q_i^{-1})^2} q_i^{a_{ij}} q_i^{2m}(1-q_i^{2n})r_{m,n-1}(x,y).
  \end{align*}
  For $b=\frac{1}{2}(q_i-q_i^{-1})c_i^{-1/2}q_i^{-1/2}$ the above recursion coincides with the recursion \eqref{eq:Hmn-recursion} for $H_{m,n}(x,y;q_i^2,q_i^{a_{ij}})$. Moreover, $r_{0,0}(x,y)=1$ and $r_{m,n}(x,y)=r_{n,m}(y,x)$ for all $m,n\in\N$ by \eqref{eq:wmn-symmetry}. Hence $r_{m,n}(x,y)=H_{m,n}(x,y;q_i^2,q_i^{a_{ij}})$ for this choice of $b$. 
\end{proof}
%%%%%%%%%%%%%%%%%%%%%%%%%%%%%%%%%%%%%%%%%%%%%%%%%%%%%%%%%%%
For any polynomial $w(x,y)=\sum_{r,s}b_{rs} x^r y^s\in \field[x,y]$ and any $u_1, u_2,u_3\in B_\bc$ set
\begin{align*}
  u \curvearrowright w(u_1, u_2) = \sum_{r,s}b_{rs} u_1^r\,  u_3\, u_2^s \in B_\bc,
\end{align*}
in analogy to the notation \eqref{eq:curve-action}. Combining Theorem \ref{thm:BHU}.(2), Proposition \ref{prop:eta-kernel}, Lemma \ref{lem:wmn} and Proposition \ref{prop:wH} we are now able to write down the deformed quantum Serre relations satisfied by the generators $B_i, B_j$ of $B_\bc$. Recall that in this section we always assume that $i=\tau(i)\neq j$.
%%%%%%%%%%%%%%%%%%%%%%%%%%%%%%%%%%%%%%%%%%%%%%%%%%%%%%%%%%
\begin{cor}\label{cor:dqS-bivariate}
  The generators $B_i, B_j$ of $B_\bc$ satisfy the relation
  \begin{align}\label{eq:dqS-bivariate}
     \sum_{\ell=0}^{1-a_{ij}} (-1)^{\ell} \begin{bmatrix}1-a_{ij} \\ \ell \end{bmatrix}_{q_i}
B_j \curvearrowright w_{1-a_{ij} - \ell, \ell}(B_i, B_i) =0
  \end{align}
  where the polynomial $w_{m,n}(x,y)\in \field[x,y]$ is given by \eqref{eq:wH}.
\end{cor}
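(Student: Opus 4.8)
The plan is to evaluate the quantum Serre polynomial $S_{ij}(F_i,F_j)\in T(V^-)$ through the algebra maps of Section 4, keeping track of how the $\circledast$-product on $T(V^-)$, the $\ast$-product on $U^-$ and the ordinary product on $B_\bc$ correspond under $\eta$ and $\psi$.

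First I would expand
$$S_{ij}(F_i,F_j)=\sum_{\ell=0}^{1-a_{ij}}(-1)^\ell\begin{bmatrix}1-a_{ij}\\\ell\end{bmatrix}_{q_i}F_i^{1-a_{ij}-\ell}\,F_j\,F_i^\ell$$
as an element of the tensor algebra $T(V^-)$, and rewrite each monomial $F_i^{1-a_{ij}-\ell}F_jF_i^\ell$ using Lemma \ref{lem:wmn} as $F_j\curvearrowright w_{1-a_{ij}-\ell,\ell}(F_i\stackrel{\circledast}{,}F_i)$. This presents $S_{ij}(F_i,F_j)$, now read inside $(T(V^-),\circledast)$, as
$$\sum_{\ell=0}^{1-a_{ij}}(-1)^\ell\begin{bmatrix}1-a_{ij}\\\ell\end{bmatrix}_{q_i}F_j\curvearrowright w_{1-a_{ij}-\ell,\ell}(F_i\stackrel{\circledast}{,}F_i).$$

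Next I would apply the composite homomorphism $\psi^{-1}\circ\eta\colon(H_\theta\ltimes T(V^-),\circledast)\to B_\bc$, which is an algebra map fixing $H_\theta$ and sending each generator $F_k$ to $B_k$, and which therefore carries the $\circledast$-product to the ordinary product of $B_\bc$. By Proposition \ref{prop:eta-kernel} the quantum Serre polynomial $S_{ij}(F_i,F_j)$ lies in $\ker\eta$, so its image under this composite vanishes. Because $\psi^{-1}\circ\eta$ respects the defining recipe \eqref{eq:curve-action} of $\curvearrowright$—replacing each factor by its image and the product $\circledast$ by the ordinary product—it sends $F_j\curvearrowright w_{m,n}(F_i\stackrel{\circledast}{,}F_i)$ to $B_j\curvearrowright w_{m,n}(B_i,B_i)$ in the notation introduced just before the statement. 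This yields exactly \eqref{eq:dqS-bivariate}, and the explicit form of $w_{m,n}$ is supplied by Proposition \ref{prop:wH} via \eqref{eq:wH}.

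Almost all of the real content sits in the earlier results—Lemma \ref{lem:wmn}, which produces the polynomials $w_{m,n}$ from the deformed product, and Proposition \ref{prop:wH}, which identifies them with rescaled bivariate continuous $q$-Hermite polynomials. Consequently the present step is essentially formal; the one point to verify carefully is that an algebra homomorphism fixing the generators does intertwine the $\curvearrowright$-operations for $\circledast$ and for the ordinary product, which is immediate from \eqref{eq:curve-action} since $\curvearrowright$ is built from the relevant product and the images of $F_i,F_j$. I expect this bookkeeping to be the only mild obstacle.
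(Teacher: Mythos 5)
Your argument is correct and is essentially the paper's own proof: the paper derives Corollary \ref{cor:dqS-bivariate} precisely by combining Theorem \ref{thm:BHU}.(2), Proposition \ref{prop:eta-kernel}, Lemma \ref{lem:wmn} and Proposition \ref{prop:wH}, i.e.\ by rewriting $S_{ij}(F_i,F_j)$ via the $\circledast$-product and transporting it to $B_\bc$ through $\psi^{-1}\circ\eta$, exactly as you do. Your explicit check that this composite intertwines the two $\curvearrowright$-operations is the right (and only) bookkeeping point, and it holds since the coefficients $b_{rs}$ lie in $\field$ and the map fixes $H_\theta$ and sends $F_k\mapsto B_k$.
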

%%%%%%%%%%%%%%%%%%%%%%%%%%%%%%%%%%%%%%%%%%%%%%%%%%%%%%%%%%
With $b_i$ as in Proposition \ref{prop:wH} we define univariate polynomials $w_m(x)\in\field[x]$ by
\begin{align}\label{eq:wm-qHermite}
  w_m(x)=w_{m,0}(x,y)=\frac{1}{(2b_i)^m} H_m(b_ix;q_i^2)
\end{align}
for all $m\in \N$. By \eqref{eq:wmn-recursion} the polynomials $w_{m}(x)$ satisfy the recursion
\begin{align}\label{eq:wm-recursion1}
  w_{m+1}(x)=x w_m(x) - c(m)_{q_i^2}w_{m-1}(x).
\end{align}

Note that the polynomials $w_m(x)$ depend on a choice of $i\in I$, but we do not make this explicit in the notation.
%%%%%%%%%%%%%%%%%%%%%%%%%%%%%%%%
\begin{eg}\label{eg:wm}
For  small values of $m$ the polynomials $w_m(x)$ are given by
\begin{align*}
  w_0(x)&=1,\quad
  w_1(x)=x, \quad
  w_2(x)=x^2-c, \quad
  w_3(x)=x^3-(1+(2)_{q_i^2})cx,\\
  w_4(x)&= x^4 - c((1+(2)_{q^2_i}+(3)_{q^2_i})x^2+c^2(3)_{q_i^2}.
\end{align*}
\end{eg}
%%%%%%%%%%%%%%%%%%%%%%%%%%%%%%%%%%%%%%%%%%%%%%%%%%
\begin{rema} 
   Define $w^{(n)}(x)\in \field[x]$ by
\begin{align*}
  w^{(n)}(x)= \frac{w_n(x)}{[n]_{q_i}^!} = \frac{H_n(b_i x;q_i^2)}{(2b_i)^n[n]_{q_i}^!}.
\end{align*}
  In terms of the divided powers $w^{(n)}(x)$ the recursion \eqref{eq:wm-recursion1} can be rewritten as
  \begin{align}\label{eq:wm-recursion2}
     [m]_{q_i}w^{(m)}(x) = x w^{(m-1)}(x) - c q_i^{m-2} w^{(m-2)}(x).
  \end{align}
  Interestingly, this recursion appeared for non quasi-split quantum symmetric pairs in \cite[(5.8)]{a-BW18p}.
\end{rema}  
%%%%%%%%%%%%%%%%%%%%%%%%%%%%%%%%%%%%%%%%%%%%%%%%%%
Using Equations \eqref{eq:wH} and \eqref{bivariate in terms of univariate} we can express $w_{m,n}(x,y)$ in terms of the univariate polynomials $w_m(x)$. Additionally using the relations
\begin{align*}
(q^2;q^2)_k&=(-1)^k q^{k(k+1)/2}(q{-}q^{-1})^k[k]_q^!, & \frac{(q^2;q^2)_n}{(q^2;q^2)_m (q^2;q^2)_{n-m}}&= q^{m(n-m)}\begin{bmatrix}n \\ m \end{bmatrix}_{q}
\end{align*}
  for $n\ge m$, we obtain
\begin{align*}
  w_{m,n}(x,y) =\hspace{-.2cm} \sum_{k=0}^{\min(m,n)}  \hspace{-.2cm} (-1)^k c^k q_i^{k(m+n+a_{ij}-1)-\frac{k(k+1)}{2}} \begin{bmatrix}m \\ k \end{bmatrix}_{q_i}
  \hspace{-.1cm}\begin{bmatrix}n \\ k \end{bmatrix}_{q_i}\hspace{-.2cm}
    [k]_{q_i}^! w_{m-k}(x) w_{n-k}(y).
\end{align*} 
With this relation we calculate
\begin{align*}
  \sum_{n=0}^{1-a_{ij}}(-1)^n&\begin{bmatrix}1-a_{ij} \\ n \end{bmatrix}_{q_i} w_{1-a_{ij}-n,n}(x,y) = \sum_{n=0}^{1-a_{ij}}(-1)^n [1{-}a_{ij}]_{q_i}^! \cdot \\
  &\cdot \sum_{k=0}^{\min(1-a_{ij}-n,n)}\frac{(-1)^k c^k q_i^{-k(k+1)/2}}{[1{-}a_{ij}{-}n{-}k]_{q_i}^![n{-}k]_{q_i}^![k]_{q_i}^!} w_{1-a_{ij}-n-k}(x) w_{n-k}(y).
\end{align*}
Setting $\ell=n+k$ and $m=k$ we obtain
\begin{align}
  \sum_{n=0}^{1-a_{ij}}(-&1)^n\begin{bmatrix}1-a_{ij} \\ n \end{bmatrix}_{q_i} w_{1-a_{ij}-n,n}(x,y)\label{eq:qSerre-tauii}\\
  = \sum_{\ell=0}^{1-a_{ij}}(-1)^\ell
  \begin{bmatrix}1-a_{ij} \\ \ell \end{bmatrix}_{q_i}& w_{1-a_{ij}-\ell}(x)  \cdot\sum_{m=0}^{\lfloor\ell/2\rfloor}c^m q_i^{-m(m+1)/2}\begin{bmatrix}\ell \\ 2m \end{bmatrix}_{q_i} \frac{[2m]^!_{q_i}}{[m]^!_{q_i}} w_{\ell-2m}(y).\nonumber
\end{align}
%%%%%%%%%%%%%%%%%%%%%%%%%%%%%%%%%%%%%%%%%%%%%%%%%%%%%%%%%%%%
In view of Equation \eqref{eq:qSerre-tauii} it is natural to consider a second family of polynomials $v_n(x)\in \field[x]$ defined for all $n\in \N$ by
\begin{align*}
  v_n(x) = \sum_{k=0}^{\lfloor n/2 \rfloor} c^k q_i^{-k(k+1)/2}\begin{bmatrix}n \\ 2k \end{bmatrix}_{q_i} \frac{[2k]^!_{q_i}}{[k]^!_{q_i}} w_{n-2k}(x)
\end{align*}
 The polynomials $v_n(x)$ can also be interpreted in terms of continuous $q$-Hermite polynomials. The proof of the first part of the following proposition is adapted from a similar calculation in the proof of \cite[Lemma 5.10]{a-BW18p}.
%%%%%%%%%%%%%%%%%%%%%%%%%%%%%%%%%%%%%%%%%%%%%%%%
 \begin{prop}
   The polynomials $v_n(x)$ satisfy $v_{-1}(x)=0$, $v_0(x)=1$ and the recursion
   \begin{align}\label{eq:vm-recursion2}
       v_{m+1}(x) = x v_m(x) + c q_i^{-2}(m)_{q_i^{-2}} v_{m-1}(x) 
   \end{align}
   for all $m\in \N$. The polynomial $v_m(x)$ is given by
   \begin{align}\label{eq:vm-qHermite}
      v_m(x)=\frac{1}{(2b_i)^m} H_m(b_ix;q_i^{-2})  
   \end{align}
   where as before $b_i=\frac{1}{2}(q_i-q_i^{-1})c_i^{-1/2}q_i^{-1/2}$. 
 \end{prop}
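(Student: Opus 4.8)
The plan is to prove the three-term recursion \eqref{eq:vm-recursion2} first, treating it as the substantive part of the statement (this is the calculation modelled on \cite[Lemma 5.10]{a-BW18p}), and then to deduce the closed formula \eqref{eq:vm-qHermite} by comparing recursions. The boundary data are immediate: for $n=0$ the defining sum has only the $k=0$ term, so $v_0(x)=w_0(x)=1$, while $v_{-1}(x)=0$ since the sum is empty.

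For the recursion, write the defining expansion as $v_n(x)=\sum_{k\ge 0}A_{n,k}\,w_{n-2k}(x)$, where
\begin{align*}
  A_{n,k}=c^k q_i^{-k(k+1)/2}\begin{bmatrix}n\\2k\end{bmatrix}_{q_i}\frac{[2k]^!_{q_i}}{[k]^!_{q_i}}.
\end{align*}
First I would rewrite the recursion \eqref{eq:wm-recursion1} for the $w_m$ in the form $x\,w_j(x)=w_{j+1}(x)+c\,(j)_{q_i^2}\,w_{j-1}(x)$ and use it to expand $x\,v_m(x)$. Reading off the coefficient of each $w_{m+1-2\ell}(x)$ in the three quantities $v_{m+1}(x)$, $x\,v_m(x)$ and $c\,q_i^{-2}(m)_{q_i^{-2}}\,v_{m-1}(x)$ reduces the asserted identity \eqref{eq:vm-recursion2} to the single scalar relation
\begin{align*}
  A_{m+1,\ell}=A_{m,\ell}+c\,(m-2\ell+2)_{q_i^2}\,A_{m,\ell-1}+c\,q_i^{-2}(m)_{q_i^{-2}}\,A_{m-1,\ell-1},
\end{align*}
required for all $m,\ell$. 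Since every term carries the same factor $c^\ell$, this is purely an identity among $q_i$-binomial coefficients.

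The main obstacle is verifying this last identity. After cancelling $c^\ell$ and inserting the definition of $A_{n,k}$, I would expand the $q_i$-binomials by the $q$-Pascal rule and use the conversions $(m)_{q_i^2}=q_i^{m-1}[m]_{q_i}$ and $(m)_{q_i^{-2}}=q_i^{-(m-1)}[m]_{q_i}$ to put all three terms over a common denominator of $q_i$-factorials; the resulting numerator identity is then elementary (one can check the cases $\ell=0,1$ directly as a sanity test). This bookkeeping is the only delicate step, exactly as in the corresponding argument of \cite{a-BW18p}.

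Granting the recursion, the formula \eqref{eq:vm-qHermite} follows by induction on $m$. Applying the continuous $q$-Hermite recursion \eqref{q-hermite 3-term recursion} with parameter $q_i^{-2}$ and argument $b_ix$, and dividing by $(2b_i)^{m+1}$, gives
\begin{align*}
  \frac{H_{m+1}(b_ix;q_i^{-2})}{(2b_i)^{m+1}}=x\,\frac{H_m(b_ix;q_i^{-2})}{(2b_i)^m}-\frac{1-q_i^{-2m}}{4b_i^2}\,\frac{H_{m-1}(b_ix;q_i^{-2})}{(2b_i)^{m-1}}.
\end{align*}
Substituting $c=-\frac{c_iq_i^2}{q_i-q_i^{-1}}$ and $b_i=\frac{1}{2}(q_i-q_i^{-1})c_i^{-1/2}q_i^{-1/2}$ one checks directly that $-\frac{1-q_i^{-2m}}{4b_i^2}=c\,q_i^{-2}(m)_{q_i^{-2}}$, which is the mirror of the computation carried out in the proof of Proposition \ref{prop:wH}. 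Hence $(2b_i)^{-m}H_m(b_ix;q_i^{-2})$ obeys the same recursion \eqref{eq:vm-recursion2} as $v_m(x)$, with the same values at $m=0$ and $m=-1$, so the two sequences agree for all $m\in\N$.
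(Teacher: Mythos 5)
Your proposal is correct and takes essentially the same route as the paper: the paper also proves \eqref{eq:vm-recursion2} by expanding $x\,v_m(x)+cq_i^{-2}(m)_{q_i^{-2}}v_{m-1}(x)$ via the $w$-recursion \eqref{eq:wm-recursion1} and collecting coefficients of $w_{m+1-2k}(x)$, where your scalar identity for the $A_{n,k}$ is exactly the paper's simplification $[m{+}1{-}2k]_{q_i}+[k]_{q_i}\bigl(q_i^{m+1-k}+q_i^{k-(m+1)}\bigr)=[m{+}1]_{q_i}$. The closed form \eqref{eq:vm-qHermite} is then obtained in the paper by the same comparison with the continuous $q$-Hermite recursion \eqref{q-hermite 3-term recursion} after substituting $c=-\frac{c_iq_i^2}{q_i-q_i^{-1}}$, matching your computation of the coefficient $-\frac{1-q_i^{-2m}}{4b_i^2}=cq_i^{-2}(m)_{q_i^{-2}}$.
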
  
%%%%%%%%%%%%%%%%%%%%%%%%%%%%%%%%%%%%%%%%%%%%%%%%
 \begin{proof}
   A direct calculation using \eqref{eq:wm-recursion1} gives 
   \begin{align*}
     &x v_m(x) + c q_i^{-2}(m)_{q_i^{-2}} v_{m-1}(x)\\
     &\stackrel{\eqref{eq:wm-recursion1}}{=}\sum_{k= 0}^{\lfloor m/2 \rfloor} c^k q_i^{-k(k+1)/2}
\begin{bmatrix}m \\ 2k \end{bmatrix}_{q_i} \frac{[2k]^!_{q_i}}{[k]^!_{q_i}}
  \bigg( w_{m-2k+1}(x) + c (m-2k)_{q_i^{2}} w_{m-2k-1}(x) \bigg)\\
  &\qquad \qquad+ c [m]_{q_i}q_i^{-m-1} \sum_{k= 0}^{\lfloor(m-1)/2\rfloor} c^k q_i^{-k(k+1)/2}\begin{bmatrix}m-1 \\ 2k \end{bmatrix}_{q_i} \frac{[2k]^!_{q_i}}{[k]^!_{q_i}}w_{m-1-2k}(x)\\
  &\stackrel{\phantom{\eqref{eq:wm-recursion1}}}{=} w_{m+1}(x)\\
  & + 
  \sum_{k= 1}^{\lfloor(m+1)/2 \rfloor}  \frac{c^k q_i^{-k(k+1)/2} [m]_{q_i}^!}{[m{+}1{-}2k]^!_{q_i}[k]_{q_i}^!}\bigg([m{+}1{-}2k]_{q_i} + [k]_{q_i}\big(q_i^{m+1-k}{+}q_i^{k-(m+1)})\bigg) w_{m+1-2k}(x)\\
     &\stackrel{\phantom{\eqref{eq:wm-recursion2}}}{=} v_{m+1}(x)
   \end{align*}
   which proves the recursion \eqref{eq:vm-recursion2}.
   Using $c=-\frac{c_iq_i^2}{q_i-q_i^{-1}}$ the recursion \eqref{eq:vm-recursion2} can be rewritten as
   \begin{align*}
     v_{m+1}(x)=x v_m(x) + \frac{c_iq_i}{(q_i-q_i^{-1})^2}(q_i^{-2m}-1)v_{m-1}(x).
   \end{align*}
   Now Equation \eqref{eq:vm-qHermite} follows by comparison with the recursion \eqref{q-hermite 3-term recursion}.   
 \end{proof}
%%%%%%%%%%%%%%%%%%%%%%%%%%%%%%%%%%%%%%%%%%%%%%%%%%%%%%%
 Combining Corollary \ref{cor:dqS-bivariate} with Equation \eqref{eq:qSerre-tauii} we are able to express the deformed quantum Serre relations in terms of univariate continuous $q$-Hermite polynomials.
 %%%%%%%%%%%%%%%%%%%%%%%%%%%%%%%%%%%%%%%%%%%%%%%%%%%%%%%
 \begin{cor}\label{cor:dqS-univariate}
   Let $i,j\in I$ with $\tau(i)=i\neq j$. The generators $B_i,B_j$ of $B_\bc$ satisfy the relation
  \begin{align}\label{eq:qSerre-tauii2}
    \sum_{\ell=0}^{1-a_{ij}} (-1)^{\ell}\begin{bmatrix}1-a_{ij} \\ \ell \end{bmatrix}_{q_i} w_{1-a_{ij}-\ell}(B_i) B_j v_{\ell}(B_i)= 0.
  \end{align}
  where the polynomials $w_m(x)$ and $v_m(x)$ in $\field[x]$ are given by \eqref{eq:wm-qHermite} and \eqref{eq:vm-qHermite}, respectively.
 \end{cor}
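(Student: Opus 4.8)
The plan is to recognize that \eqref{eq:qSerre-tauii2} is nothing but the image of the already-established bivariate relation \eqref{eq:dqS-bivariate} under the linear operation $B_j \curvearrowright (-)$, once the right-hand side of \eqref{eq:qSerre-tauii} has been repackaged in terms of the polynomials $v_\ell$.

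First I would observe that the inner sum on the right-hand side of \eqref{eq:qSerre-tauii} is, after the relabelling $m \to k$ and $\ell \to n$, exactly the defining expression for $v_\ell(y)$. Thus \eqref{eq:qSerre-tauii} collapses into the polynomial identity in $\field[x,y]$
\[
\sum_{n=0}^{1-a_{ij}}(-1)^n\begin{bmatrix}1-a_{ij} \\ n \end{bmatrix}_{q_i} w_{1-a_{ij}-n,n}(x,y) = \sum_{\ell=0}^{1-a_{ij}}(-1)^\ell \begin{bmatrix}1-a_{ij} \\ \ell \end{bmatrix}_{q_i} w_{1-a_{ij}-\ell}(x)\,v_\ell(y).
\]

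Next I would apply the map $B_j \curvearrowright (-)$, specializing both $x$ and $y$ to $B_i$, to each side of this identity. Two structural facts make this immediate: the operation $B_j \curvearrowright (-)$ is $\field$-linear in its polynomial argument, and on a product polynomial $p(x)q(y)=\sum_{r,s}a_r b_s x^r y^s$ it factorizes, directly from the definition $u_3 \curvearrowright w(u_1,u_2)=\sum_{r,s}b_{rs}u_1^r u_3 u_2^s$, as $B_j \curvearrowright [p(x)q(y)](B_i,B_i) = p(B_i)\,B_j\,q(B_i)$. Applying this to the right-hand side turns the $\ell$-th summand into $w_{1-a_{ij}-\ell}(B_i)\,B_j\,v_\ell(B_i)$, which is precisely the $\ell$-th summand of \eqref{eq:qSerre-tauii2}.

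Finally, the image of the left-hand side of the displayed identity under $B_j \curvearrowright (-)$ equals $\sum_{\ell}(-1)^\ell \begin{bmatrix}1-a_{ij} \\ \ell \end{bmatrix}_{q_i} B_j \curvearrowright w_{1-a_{ij}-\ell,\ell}(B_i,B_i)$, and this vanishes by Corollary \ref{cor:dqS-bivariate}. Equating the two images then yields \eqref{eq:qSerre-tauii2}. There is no genuine analytic or combinatorial obstacle here: the whole argument is the bookkeeping identification of the inner sum with $v_\ell$ together with the linearity and factorization of $B_j \curvearrowright (-)$. The only points that demand care are checking that the reindexing $\ell=n+k$, $m=k$ leading to \eqref{eq:qSerre-tauii} has been performed correctly, and that the factorization property is applied to an \emph{honest} identity of commuting polynomials in $\field[x,y]$ rather than to already-evaluated noncommutative operators.
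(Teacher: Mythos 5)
Your proposal is correct and follows the paper's own route exactly: the paper derives the polynomial identity \eqref{eq:qSerre-tauii} (whose inner sum is by definition $v_\ell$) and then combines it with Corollary \ref{cor:dqS-bivariate}, which is precisely your argument with the linearity and factorization of $B_j \curvearrowright (-)$ on products $p(x)q(y)$ made explicit. Your closing caution --- that the factorization must be applied to the honest identity in $\field[x,y]$ before evaluation at $B_i$ --- is a sound reading of why the paper states \eqref{eq:qSerre-tauii} as a commutative polynomial identity first.
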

%%%%%%%%%%%%%%%%%%%%%%%%%%%%%%%%%%%%%%%%%%%%%%%%%%%%%%%%%%%
 A resummation shows that \eqref{eq:qSerre-tauii2} can alternatively be written as
 \begin{align}\label{eq:qSerre-tauii3}
    \sum_{\ell=0}^{1-a_{ij}} (-1)^{\ell}\begin{bmatrix}1-a_{ij} \\ \ell \end{bmatrix}_{q_i} v_{1-a_{ij}-\ell}(B_i) B_j w_{\ell}(B_i)= 0.
 \end{align}
%%%%%%%%%%%%%%%%%%%%%%%%%%%%%%%%%%%%%%%%%%%%%%%%%
 \begin{rema}
   To make the relation between the polynomials $w_m(x)$ and $v_m(x)$ even clearer, we write $w_m(x;q_i,c)$ and $v_m(x;q_i,c)$ for the polynomials given by the recursions \eqref{eq:wm-recursion1} and \eqref{eq:vm-recursion2}, respectively, with $w_{-1}(x;q_i,c)=v_{-1}(x;q_i,c)=0$ and $w_0(x;q_i,c)=v_0(x;q_i,c)=1$.
 Then we have
 \begin{align} \label{eq:vmwm}
   v_m(x;q_i,c) = w_m(x;q_i^{-1},-q_i^{-2}c).
 \end{align}
 for all $m\in \N$.
 \end{rema}
%%%%%%%%%%%%%%%%%%%%%%%%%%%%%%%%%%%%%%%%%%%%%%%%%%%%%%%%%%%
\begin{rema}
  Corollary \ref{cor:dqS-univariate} can be used to calculate the deformed quantum Serre relation satisfied by the generators $B_i,B_j$ explicitly in the case $\tau(i)=i$ for small values of $-a_{ij}$. Using the expressions for $w_m(x)$ in Example \ref{eg:wm} and \eqref{eq:vmwm} one obtains
    \begin{align*}
      \sum_{n=0}^{1-a_{ij}}&(-1)^n\begin{bmatrix}1-a_{ij} \\ n \end{bmatrix}_{q_i} B_i^{1-a_{ij}-n}B_j B_i^n\\
      & = \begin{cases}
        \phantom{-}0 & \mbox{if $a_{ij}=0$,}\\
        -q_i c_i B_j & \mbox{if $a_{ij}=-1$,}\\
        -[2]_{q_i}^2 q_i c_i (B_i B_j - B_j B_i) & \mbox{if $a_{ij}=-2$,}\\
        -([3]_{q_i}^2+1)q_i c_i(B_i^2B_j+B_j B_i^2)& \\
        \qquad + [4]_{q_i} ([2]_{q_i}^2+1)q_ic_i B_iB_jB_i -[3]_{q_i}^2 (q_ic_i)^2 B_j & \mbox{if $a_{ij}=-3$.}
          \end{cases}
    \end{align*}
In slightly different conventions, these formulas first appeared in \cite[Lemma 2.2]{a-Letzter97}, \cite[Theorem 7.1]{a-Letzter03} for $q$ not a root of unity.    
\end{rema}
 %%%%%%%%%%%%%%%%%%%%%%%%%%%%%%%%%%%%%%%%%%%%%%%%%%%%%%%%%%%
 \begin{rema}
  In \cite[Eq. (3.9)]{a-CLW18p} the relation \eqref{eq:qSerre-tauii2} is expressed in terms of so-called $\imath$divided powers for $q$ not a root of unity. Similarly to \eqref{eq:qSerre-tauii2} and \eqref{eq:qSerre-tauii3}, the $\imath$divided powers allow two equivalent expressions for the deformed quantum Serre relation in the case $\tau(i)=i\neq j$. It would be interesting to establish a relation between the $\imath$divided powers of \cite{a-CLW18p} and the continuous $q$-Hermite polynomials. 
 \end{rema}
 %%%%%%%%%%%%%%%%%%%%%%%%%%%%%%%%%%%%%%%%%%%%%%%%%%%%%%%%%%%
 \begin{rema}
   Assume that $\field=k(q)$ is a field of rational functions in a variable $q$ over some field $k$ of characteristic zero. Let $\overline{\phantom{m}}:\field\rightarrow \field$ denote the bar involution sending a rational function $g(q)\in \field$ to $\overline{g(q)}=g(q^{-1})$. The map $\overline{\phantom{m}}$ extends to an involutive $k$-algebra automorphism $\overline{\phantom{m}}:\field[x]\rightarrow \field[x]$ by action on the coefficients. In this setting Equation \eqref{eq:vmwm} can be rewritten as
   \begin{align*}
      v_m(x) = \overline{w_m}(x) \qquad \mbox{if $\overline{c}=-q_i^{-2}c$.}
   \end{align*}  
  In the case $\overline{c}=-q_i^{-2}c$ the above formula and the equivalence of the relations \eqref{eq:qSerre-tauii2} and \eqref{eq:qSerre-tauii3} show that relation \eqref{eq:qSerre-tauii2} is preserved under the $k$-linear map given by $B_i\mapsto B_i$, $B_j\mapsto B_j$ and $q\mapsto q^{-1}$. This provides the essential step in the proof that the algebra $B_\bc$ has a bar-involution in the quasi-split case, as first observed in \cite[Proposition 3.7]{a-CLW18p}. Note that the existence of the bar-involution on $B_\bc$ also follows from the general theory in \cite{a-KY19p} without the need to have a presentation of $B_\bc$ in terms of generators and relations. This will be discussed elsewhere.  
 \end{rema}
%%%%%%%%%%%%%%%%%%%%%%%%%%%%%%%%%%%%%%%%%%%%%%%%%%%%%%%%%%
%%%%%%%%%%%%%%%%%%%%%%%%%%%%%%%%%%%%%%%%%%%%%%%%%%%%%%%%%%%
\subsection{Deformed quantum Serre relations for $\tau(i)=j$}
%%%%%%%%%%%%%%%%%%%%%%%%%%%%%%%%%%%%%%%%%%%%%%%%%%%%%%%%%%%
The deformed quantum Serre relations for $\tau(i)=j$ were determined in \cite[Theorem 3.6]{a-BalaKolb15} based on Letzter's method \cite{a-Letzter03} involving coproducts. In this subsection we 
offer an alternative proof in the quasi-split case based on the star product method from \cite{a-KY19p}.
Throughout we fix distinct $i,j\in I$ with $\tau(i)=j$. In this case formula \eqref{eq:Fastg} for $\circledast$ becomes
\begin{align*}
  F_i \circledast g = F_ig + \gamma_i K_j K_i^{-1} \partial_j^L(g), \qquad
  F_j \circledast g = F_jg + \gamma_j K_i K_j^{-1} \partial_i^L(g)
\end{align*}
for all $g\in U^-$, where $\gamma_i=-\frac{c_iq_i^{a_{ij}}}{q_i-q_i^{-1}}$ and $\gamma_j=-\frac{c_j q_i^{a_{ij}}}{q_i-q_i^{-1}}$. Hence $F_i^n=F_i^{\circledast n}$ and
\begin{align}\label{eq:F*Fin}
  F_j \circledast F_i^n = F_j F_i^n + \gamma_j  q_i^{(n-1)(a_{ij}-2)} (n)_{q_i^2} F_i^{n-1} K_i K_j^{-1}.
\end{align}
By induction on $m$ one moreover gets
\begin{align*}
  F_i^{\circledast m}\circledast F_jF_i^n
   &=F_i^m F_j F_i^n + \gamma_i  q_i^{n(2-a_{ij})} (m)_{q_i^2} F_i^{m+n-1} K_j K_i^{-1}.
\end{align*}  
Inserting \eqref{eq:F*Fin} into the above equation we obtain
\begin{align}
  F_i^mF_jF_i^n= F_i^{\circledast m}\circledast F_j\circledast F_i^{\circledast n}
  &-\gamma_i  q_i^{n(2-a_{ij})} (m)_{q_i^2} F_i^{\circledast(m+n-1)} K_j K_i^{-1} \label{eq:Fim*FjFin}\\
  &- \gamma_j  q_i^{(n-1)(a_{ij}-2)} (n)_{q_i^2} F_i^{\circledast(m+n-1)} K_i K_j^{-1}. \nonumber
\end{align}
Using the relation
\begin{align}\label{eq:FimFjFin}
  \sum_{n=0}^\ell (-1)^n \begin{bmatrix}\ell \\ n \end{bmatrix}_{q} q^{n(\ell + 1)} =(q^2;q^2)_\ell \qquad \mbox{for all $\ell\in\N$}
\end{align}
one shows that
\begin{align}
   &\sum_{n=0}^{1-a_{ij}} (-1)^n \begin{bmatrix}1-a_{ij} \\ n \end{bmatrix}_{q_i} q_i^{n(2-a_{ij})}(1-a_{ij}-n)_{q_i^2} =-\frac{q_i^{-1}(q_i^{2};q_i^{2})_{1-a_{ij}}}{q_i-q_i^{-1}},\label{eq:sum1}\\
  &\sum_{n=0}^{1-a_{ij}} (-1)^n \begin{bmatrix}1-a_{ij} \\ n \end{bmatrix}_{q_i} q_i^{n(a_{ij}-2)}(n)_{q_i^2} =-\frac{q_i^{-1}(q_i^{-2};q_i^{-2})_{1-a_{ij}}}{q_i-q_i^{-1}}.\label{eq:sum2}
\end{align}
Recall the notation \eqref{eq:*-poly}. The formulas \eqref{eq:sum1}, \eqref{eq:sum2} and Equation \eqref{eq:Fim*FjFin} imply the relation  
\begin{align*}
  S_{ij}(F_i,F_j)=S_{ij}(F_i\stackrel{\circledast}{,}F_j) &+ \gamma_i \frac{q_i^{-1}(q_i^{2};q_i^{2})_{1-a_{ij}}}{q_i-q_i^{-1}}F_i^{\circledast(-a_{ij})} K_j K_i^{-1}\\
  &+ \gamma_j\frac{q_i^{1-a_{ij}}(q_i^{-2};q_i^{-2})_{1-a_{ij}}}{q_i-q_i^{-1}}F_i^{\circledast(-a_{ij})} K_i K_j^{-1}
\end{align*}
in $T(V^-)$. Using again Theorem \ref{thm:BHU}.(2) and Proposition \ref{prop:eta-kernel} one obtains the following result. 
%%%%%%%%%%%%%%%%%%%%%%%%%%%%%%%%%%%%%%%%%%%%%%%%%%%%%%%%%%%
\begin{thm}\label{thm:SBB2}
  Let $i,j\in I$ with $\tau(i)=j$ and $i\neq j$ and set $m=1-a_{ij}$. Then the relation
  \begin{align*}
    S_{ij}(B_i,B_j) = \frac{c_i q_i^{-m} (q_i^{2};q_i^{2})_{m}}{(q_i-q_i^{-1})^2} B_i^{m-1} K_j K_i^{-1}
    + \frac{c_jq_i (q_i^{-2};q_i^{-2})_{m}}{(q_i-q_i^{-1})^2}B_i^{m-1} K_i K_j^{-1}
  \end{align*}
  holds in $B_\bc$.
\end{thm}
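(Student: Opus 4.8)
The plan is to follow the same blueprint used for the $\tau(i)=i$ case, namely to rewrite the quantum Serre polynomial $S_{ij}(F_i,F_j)$ in terms of the deformed product $\circledast$ on $T(V^-)$, and then transport the resulting identity to $B_\bc$ via Theorem \ref{thm:BHU}.(2) and Proposition \ref{prop:eta-kernel}. The key structural difference from the $\tau(i)=i$ case is that here $\tau(i)=j$, so the skew derivations appearing in \eqref{eq:Fastg} act \emph{across} the two indices $i$ and $j$, producing correction terms carrying the group-like elements $K_jK_i^{-1}$ and $K_iK_j^{-1}$ rather than staying within the subalgebra generated by a single $F_i$.

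First I would record the two elementary $\circledast$-product formulas that describe how a single $F_j$ and a string of $F_i$'s interact. Computing $F_j\circledast F_i^n$ using \eqref{eq:Fastg} and the Leibniz rule \eqref{eq:partialL} for $\partial_j^L$ gives the correction term in \eqref{eq:F*Fin}, since $\partial_j^L(F_i^n)$ picks out one factor. An induction on $m$ then yields the formula for $F_i^{\circledast m}\circledast F_jF_i^n$, and substituting \eqref{eq:F*Fin} into it produces the master identity \eqref{eq:Fim*FjFin}, which expresses the ordinary monomial $F_i^mF_jF_i^n$ as $F_i^{\circledast m}\circledast F_j\circledast F_i^{\circledast n}$ minus two correction terms, each a multiple of $F_i^{\circledast(m+n-1)}$ times a group-like element. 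Because $F_i^{\circledast n}=F_i^n$ here (there is no self-interaction when $\tau(i)=j\neq i$), these corrections collect cleanly.

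Next I would form the alternating sum $\sum_n(-1)^n\smash{\begin{bmatrix}1-a_{ij}\\n\end{bmatrix}_{q_i}}F_i^{1-a_{ij}-n}F_jF_i^n=S_{ij}(F_i,F_j)$ and apply the master identity term by term. The leading pieces reassemble into $S_{ij}(F_i\stackrel{\circledast}{,}F_j)$, while the two families of correction terms reduce to scalar multiples of $F_i^{\circledast(-a_{ij})}$ times $K_jK_i^{-1}$ and $K_iK_j^{-1}$ respectively. The scalar coefficients are exactly the two sums in \eqref{eq:sum1} and \eqref{eq:sum2}, which I would evaluate in closed form using the $q$-binomial identity \eqref{eq:FimFjFin}; note that \eqref{eq:sum1} requires first rewriting $(1-a_{ij}-n)_{q_i^2}$ and reindexing, and \eqref{eq:sum2} uses the same identity with $q$ replaced by $q^{-1}$, which accounts for the appearance of $(q_i^{-2};q_i^{-2})_{1-a_{ij}}$. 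This produces the stated identity for $S_{ij}(F_i,F_j)$ in $T(V^-)$.

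Finally I would push the $T(V^-)$-identity through $\eta$ and $\psi$: since $S_{ij}(F_i\stackrel{\circledast}{,}F_j)$ lies in the kernel of $\eta$ by Proposition \ref{prop:eta-kernel} (it is the image of the Serre polynomial generating $\ker\eta$), applying $\eta$ kills that term and then $\psi^{-1}$ sends $F_i\mapsto B_i$, $F_j\mapsto B_j$, turning the remaining identity into the claimed relation in $B_\bc$. The final bookkeeping step is to substitute $\gamma_i=-\frac{c_iq_i^{a_{ij}}}{q_i-q_i^{-1}}$ and $\gamma_j=-\frac{c_jq_i^{a_{ij}}}{q_i-q_i^{-1}}$ and combine the exponents of $q_i$ so that the coefficients take the symmetric form $\frac{c_iq_i^{-m}(q_i^2;q_i^2)_m}{(q_i-q_i^{-1})^2}$ and $\frac{c_jq_i(q_i^{-2};q_i^{-2})_m}{(q_i-q_i^{-1})^2}$ with $m=1-a_{ij}$. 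I expect the main obstacle to be the accurate evaluation of the two alternating $q$-sums \eqref{eq:sum1} and \eqref{eq:sum2} and the careful tracking of the powers of $q_i$ coming from $\partial^L$ through \eqref{eq:partialL}; everything else is a routine transport of structure along the established isomorphism.
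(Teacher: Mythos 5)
Your outline reproduces the paper's argument step for step up to the very end: the same formula \eqref{eq:F*Fin} for $F_j\circledast F_i^n$, the same induction giving $F_i^{\circledast m}\circledast F_jF_i^n$, the same master identity \eqref{eq:Fim*FjFin}, and the same evaluation of the alternating sums \eqref{eq:sum1}, \eqref{eq:sum2} via \eqref{eq:FimFjFin}. But the final transport step contains a genuine error: you assert that $S_{ij}(F_i\stackrel{\circledast}{,}F_j)$ lies in $\ker\eta$ and that applying $\eta$ "kills that term." Proposition \ref{prop:eta-kernel} says the opposite: $\ker\eta$ is generated by the quantum Serre polynomials $S_{ij}(F_i,F_j)$ computed with the \emph{ordinary} tensor-algebra product in $T(V^-)$ (these are exactly the elements mapping to $0$ under the projection to $U^-$, where the Serre relations hold). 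The $\circledast$-Serre polynomial is emphatically \emph{not} in the kernel: since $\eta$ is a homomorphism of the deformed products fixing the generators, $\eta\bigl(S_{ij}(F_i\stackrel{\circledast}{,}F_j)\bigr)=S_{ij}(F_i\stackrel{\ast}{,}F_j)=\psi\bigl(S_{ij}(B_i,B_j)\bigr)$, and the non-vanishing of this element is precisely the content of Theorem \ref{thm:SBB2}. Indeed, if your claim held, applying $\eta$ to the $T(V^-)$-identity
\begin{align*}
  S_{ij}(F_i,F_j)=S_{ij}(F_i\stackrel{\circledast}{,}F_j) + \gamma_i \frac{q_i^{-1}(q_i^{2};q_i^{2})_{m}}{q_i-q_i^{-1}}F_i^{\circledast(m-1)} K_j K_i^{-1}
  + \gamma_j\frac{q_i^{m}(q_i^{-2};q_i^{-2})_{m}}{q_i-q_i^{-1}}F_i^{\circledast(m-1)} K_i K_j^{-1}
\end{align*}
would yield $0=0+(\text{correction terms})$, forcing the manifestly nonzero elements $F_i^{m-1}K_jK_i^{-1}$ and $F_i^{m-1}K_iK_j^{-1}$ of $H_\theta\ltimes U^-$ to vanish; equivalently it would prove $S_{ij}(B_i,B_j)=0$, contradicting the very relation you are trying to establish.

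The correct endgame, which is the one the paper uses, reverses the roles of the two Serre polynomials: $\eta$ annihilates the \emph{left-hand side} $S_{ij}(F_i,F_j)$, leaving $0=S_{ij}(F_i\stackrel{\ast}{,}F_j)+\eta(\text{corrections})$ in $(H_\theta\ltimes U^-,\ast)$; then $\psi^{-1}$, which identifies $\ast$-monomials in the $F$'s with ordinary monomials in the $B$'s, converts this into $S_{ij}(B_i,B_j)=-\gamma_i\frac{q_i^{-1}(q_i^2;q_i^2)_m}{q_i-q_i^{-1}}B_i^{m-1}K_jK_i^{-1}-\gamma_j\frac{q_i^{m}(q_i^{-2};q_i^{-2})_m}{q_i-q_i^{-1}}B_i^{m-1}K_iK_j^{-1}$, and substituting $\gamma_i=-\frac{c_iq_i^{a_{ij}}}{q_i-q_i^{-1}}$, $\gamma_j=-\frac{c_jq_i^{a_{ij}}}{q_i-q_i^{-1}}$ with $a_{ij}=1-m$ gives the stated coefficients. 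Everything else in your proposal — including the observation that $F_i^{\circledast n}=F_i^n$ because $\partial_j^L$ annihilates powers of $F_i$, and the reindexing/$q\mapsto q^{-1}$ trick for the two sums — is correct and matches the paper; only this one directional confusion about $\ker\eta$ needs to be repaired.
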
  
%%%%%%%%%%%%%%%%%%%%%%%%%%%%%%%%%%%%%%%%%%%%%%%%%%%%%%%%%%%
%%%%%%%%%%%%%%%%%%%%%%%%%%%%%%%%%%%%%%%%%%%%%%%%%%%%%%%%%%
%\bibliographystyle{amsalpha}
%\bibliography{thebib}

\begin{thebibliography}{AFMO}
\bibitem[AAR99]{andrews1999}
G.~E. Andrews, R. Askey, and R. Roy, \emph{Special functions},
  vol.~71, Cambridge Univ. Press, 1999.

\bibitem[AS02]{MSRI-AS02}
N.~Andruskiewitsch and H.-J. Schneider, \emph{Pointed {H}opf algebras}, New
  directions in {H}opf algebras (Cambridge), MSRI publications, vol.~43,
  Cambridge Univ. Press, 2002, pp.~1--68.

\bibitem[BC20]{a-BorCor22}
A.~Borodin and I.~Corwin, \emph{Dynamic ASEP, duality, and continuous
  $q^{-1}$-Hermite polynomials}, Int. Math. Res. Not. IMRN \textbf{2020}
  (2020), 641--668.

\bibitem[BK15]{a-BalaKolb15}
M.~Balagovi{\'c} and S.~Kolb, \emph{The bar involution for quantum symmetric
  pairs}, Represent. Theory \textbf{19} (2015), 186--210.

\bibitem[BW18]{a-BW18p}
H.~Bao and W.~Wang, \emph{Canonical bases arising from quantum symmetric pairs
  of {K}ac-{M}oody type}, Preprint, {\ttfamily arXiv:1811.09848v1} (2018).

\bibitem[CLW18]{a-CLW18p}
X.~Chen, M.~Lu, and W.~Wang, \emph{A {S}erre presentation for the
  $\imath$quantum groups}, preprint, {\ttfamily arXiv:1810.12475 }, to appear in Transform. Groups. 

\bibitem[dC19]{a-dC19p}
H.~de Clercq, \emph{Defining relations for quantum symmetric pairs of  {K}ac-{M}oody type}, preprint, {\ttfamily arXiv:1912.05368}. 

\bibitem[DK90]{a-dCK90}
C.~De Concini and V.~G.~Kac, \emph{Representations of quantum groups at roots of 1}, Operator algebras, unitary representations, enveloping algebras, and invariant theory (Paris, 1989), Progr. Math., vol.~92, Birkh\"auser Boston, 1990, pp.~471--506.

\bibitem[Dri87]{inp-Drinfeld1}
V. G. Drinfeld, \emph{Quantum groups}, Proc. ICM 1986, Amer. Math. Soc.,
  1987, pp.~798--820.

\bibitem[DX14]{dunkl2014}
C.~F. Dunkl and Y. Xu, \emph{Orthogonal polynomials of several
  variables}, no. 155, Cambridge Univ. Press, 2014.

\bibitem[Fav35]{favard}
J. Favard, \emph{Sur les polynomes de {T}chebicheff}, C. R. Math. Acad. Sci. Paris
  \textbf{200} (1935), 2052--2055.

\bibitem[It{\^o}52]{ito}
K. It{\^o}, \emph{Complex multiple wiener integral}, Japanese journal of
  mathematics: transactions and abstracts, vol.~22, The Mathematical Society of
  Japan, 1952, pp.~63--86.

\bibitem[IZ17]{ismail}
M. Ismail and R. Zhang, \emph{On some 2d orthogonal $q$-polynomials},
  Trans. Amer. Math. Soc. \textbf{369} (2017), no.~10, 6779--6821.

\bibitem[Jim85]{a-Jimbo1}
M.~Jimbo, \emph{A $q$-analogue of ${U}(\mathfrak{g})$ and the {Y}ang-{B}axter
  equation}, Lett. Math. Phys. \textbf{11} (1985), 63--69.

\bibitem[Kac90]{b-Kac1}
V.~G. Kac, \emph{Infinite dimensional {L}ie algebras}, 3rd. ed., Cambridge
  Univ. Press, Cambridge, 1990.

\bibitem[KLS10]{koekoek2010}
R. Koekoek, P.~A. Lesky, and R.~F. Swarttouw, \emph{Hypergeometric
  orthogonal polynomials and their q-analogues}, Springer Science \& Business
  Media, 2010.

\bibitem[Kol14]{a-Kolb14}
S.~Kolb, \emph{Quantum symmetric {K}ac-{M}oody pairs}, Adv. Math. \textbf{267}
  (2014), 395--469.

\bibitem[KY19]{a-KY19p}
S.~Kolb and M.~Yakimov, \emph{Symmetric pairs for {N}ichols algebras of
  diagonal type via star products}, Adv. Math. \textbf{365} (2020), 107042, 69pp. 

\bibitem[Let97]{a-Letzter97}
G.~Letzter, \emph{Subalgebras which appear in quantum {I}wasawa
  decompositions}, Canad. J. Math. \textbf{49} (1997),
  1206--1223.

\bibitem[Let99]{a-Letzter99}
G.~Letzter, \emph{Symmetric pairs for quantized enveloping algebras}, J. Algebra \textbf{220} (1999), 729--767.
  
\bibitem[Let03]{a-Letzter03}
G.~Letzter, \emph{Quantum symmetric pairs and their zonal spherical functions},
  Transf. Groups \textbf{8} (2003), 261--292.

\bibitem[Sto19]{a-Stok19}
J.~Stokman, \emph{Generalized {O}nsager algebras},
  Algebr. Represent. Theory (2019) {\ttfamily https://doi.org/10.1007/s10468-019-09903-6}.
  
\bibitem[Lus94]{b-Lusztig94}
G.~Lusztig, \emph{Introduction to quantum groups}, Birkh{\"a}user, Boston,
  1994.

\bibitem[Xu93]{xu1993}
Y.~Xu, \emph{On multivariate orthogonal polynomials}, SIAM J. Math. Anal.
  \textbf{24} (1993), no.~3, 783--794. 

\bibitem[Xu05]{xu2005}
Y.~Xu, \emph{Lecture notes on orthogonal polynomials of several variables},
  Inzell {L}ectures on {O}rthogonal {P}olynomials, Adv. Theory Spec. Funct.
  Orthogonal Polynomials, vol.~2, Nova Sci. Publ., Hauppauge, NY, 2005,
  pp.~141--196.
\end{thebibliography}

%%%%%%%%%%%%%%%%%%%%%%%%%%%%%%%%%%%%%%%%%%%%%%%%%%%%%%%%%%
\end{document}